 \newtheoremstyle{mytheorem}
 {3pt}
 {3pt}
 {\slshape}
 {}
 {\bfseries}
 {.}
 { }
 {}
\numberwithin{equation}{section}
\newtheorem{theorem}{Theorem}[section]
\newtheorem*{theorem*}{Theorem}
\newtheorem{corollary}[theorem]{Corollary}
\newtheorem{lemma}[theorem]{Lemma}
\newtheorem{proposition}[theorem]{Proposition}
\providecommand{\customgenericname}{}
\newcommand{\newcustomtheorem}[2]{%
	\newenvironment{#1}[1]
	{%
		\renewcommand\customgenericname{#2}%
		\renewcommand\theinnercustomgeneric{##1}%
		\innercustomgeneric
	}
	{\endinnercustomgeneric}
}
\theoremstyle{definition}
\newtheorem{definition}{Definition}[section]
\newtheorem{example}{Example}[section]
\newtheorem*{example*}{Example}
\newtheorem*{examples*}{Examples}
\newtheorem{remark}{Remark}[section]
\newtheorem*{remark*}{Remark}
\newtheorem*{remarks*}{Remarks}
\newtheoremstyle{named}{}{}{\itshape}{}{\bfseries}{.}{.5em}{#1\thmnote{ #3}}
\theoremstyle{named}
\newcommand{\abs}[1]{\left|#1\right|}
\newcommand{\set}[1]{\left\{#1\right\}}
\newcommand{\Keywords}[1]{\ifthenelse{\isempty{#1}}{}{\smallskip \smallskip \noindent \textbf{Keywords}. #1}}
\newcommand{\MSC}[2][2010]{\ifthenelse{\isempty{#2}}{}{\smallskip \smallskip \noindent \textbf{#1MSC}. #2}}
\newcommand{\abstractnote}[1]{\ifthenelse{\isempty{#1}}{}{\smallskip \smallskip \noindent \textsuperscript{\dag}#1}}
\def\specialsection{\@startsection{section}{1}%
  \z@{\linespacing\@plus\linespacing}{.5\linespacing}%
  {\normalfont}}
\def\section{\@startsection{section}{1}%
  \z@{.7\linespacing\@plus\linespacing}{.5\linespacing}%
  {\normalfont\scshape}}
\patchcmd{\@settitle}{\uppercasenonmath\@title}{\Large\boldmath}{}{}
\patchcmd{\@settitle}{\begin{center}}{\begin{flushleft}}{}{}
\patchcmd{\@settitle}{\end{center}}{\end{flushleft}}{}{}
\patchcmd{\@setauthors}{\MakeUppercase}{\normalsize}{}{}
\patchcmd{\@setauthors}{\centering}{\raggedright}{}{}
\patchcmd{\section}{\scshape}{\large\bfseries\boldmath}{}{}
\patchcmd{\subsection}{\bfseries}{\bfseries\boldmath}{}{}
\renewcommand{\@secnumfont}{\bfseries}
\patchcmd{\@startsection}{\@afterindenttrue}{\@afterindentfalse}{}{}
\patchcmd{\abstract}{\leftmargin3pc}{\leftmargin1pc}{}{}
\def\maketitle{\par
  \@topnum\z@ 
  \@setcopyright
  \thispagestyle{empty}
  \ifx\@empty\shortauthors \let\shortauthors\shorttitle
  \else \andify\shortauthors
  \fi
  \@maketitle@hook
  \begingroup
  \@maketitle
  \toks@\@xp{\shortauthors}\@temptokena\@xp{\shorttitle}%
  \toks4{\def\\{ \ignorespaces}}
  \edef\@tempa{%
    \@nx\markboth{\the\toks4
      \@nx\MakeUppercase{\the\toks@}}{\the\@temptokena}}%
  \@tempa
  \endgroup
  \c@footnote\z@
  \@cleartopmattertags
}
\newcommand{\bC}{\mathbb{C}}
\newcommand{\bZ}{\mathbb{Z}}
\newcommand{\cT}{\mathcal{T}}
\newcommand{\cB}{\mathcal{B}}
\newcommand{\curly}{\mathrel{\leadsto}}
\def\Comp{\mathsf{Comp}}
\def\CComp{\mathsf{CComp}}
\def\DComp{\mathsf{DComp}}
\def\UComp{\mathsf{UComp}}
\def\nonu{\mathsf{nu}}
\def\leaf{\mathsf{leaf}}
\def\gleaf{\mathsf{gleaf}}
\def\int{\mathsf{int}}
\def\pint{\mathsf{pint}}
\def\Nuo{\mathsf{Nuo}}
\def\Onu{\mathsf{Onu}}
\def\Dnu{\mathsf{Dnu}}
\def\Do{\mathsf{Do}}
\def\tonu{\widetilde{\Onu}}
\def\nuo{\mathsf{nuo}}
\def\onu{\mathsf{onu}}
\def\dnu{\mathsf{dnu}}
\newcommand{\sdo}{\mathsf{do}}
\def\ap{\mathsf{ap}}
\def\uf{\underline{f}}
\def\Orb{\mathsf{Orb}}
\title[]{A group action on cyclic compositions and $\gamma$-positivity}
\author[S. Fu]{Shishuo Fu}
\address[S. Fu]{College of Mathematics and Statistics, Chongqing University, Chongqing 401331, P.R. China}
\email{fsshuo@cqu.edu.cn}
\author[J. Yang]{Jie Yang}
\address[J. Yang]{College of Mathematics and Statistics, Chongqing University, Chongqing 401331, P.R. China}
\email{18903845128@163.com}
\date{}
\begin{document}

\begin{abstract}
	
	Let $w_{n,k,m}$ be the number of Dyck paths of semilength $n$ with $k$ occurrences of $UD$ and $m$ occurrences of $UUD$. We establish in two ways a new interpretation of the numbers $w_{n,k,m}$ in terms of plane trees and internal nodes. The first way builds on a new characterization of plane trees that involves cyclic compositions. The second proof utilizes a known interpretation of $w_{n,k,m}$ in terms of plane trees and leaves, and a recent involution on plane trees constructed by Li, Lin, and Zhao. Moreover, a group action on the set of cyclic compositions (or equivalently, $2$-dominant compositions) is introduced, which amounts to give a combinatorial proof of the $\gamma$-positivity of the Narayana polynomial, as well as the $\gamma$-positivity of the polynomial $W_{2k+1,k}(t):=\sum_{1\le m\le k}w_{2k+1,k,m}t^m$ previously obtained by B\'{o}na et al, with apparently new combinatorial interpretations of their $\gamma$-coefficients.

	\Keywords{cyclic composition, Narayana polynomial, gamma positivity, Dyck path, plane tree, group action.}

	\MSC{05A05, 05A10, 05A15, 05C05}	

\end{abstract}

\maketitle

\section{Introduction}\label{sec:intro}
The sequence of {\it Catalan numbers}: $1,1,2,5,14,42,132,\ldots,$ is one of the most ubiquitous number sequences in enumerative combinatorics; see for example \cite[Ex.~6.19]{Sta1999} and \cite{Sta2015}. Two of the most familiar combinatorial models enumerated by Catalan numbers are {\it Dyck paths} and {\it plane trees}, whose definitions will be briefly recalled in section~\ref{sec:notation}. If one counts Dyck paths by their number of peaks, or equivalently plane trees by their number of leaves, a well-studied refinement of Catalan numbers arises, namely the {\it Narayana numbers} \cite[Chapter 2.3]{Pet2015}, which can be explicitly computed as
$$N_{n,k}=\frac{1}{n}\binom{n}{k}\binom{n}{k-1},$$
for $1\le k\le n$. Now if we write each Dyck path $p$ of semilength $n$ as a word $w(p)$ consisting of $n$ letters of $U$ (for up-step) and $n$ letters of $D$ (for down-step), then the number of peaks of $p$ is precisely the number of $UD$-factors contained in $w(p)$. Quite recently, B\'ona et al. \cite{BDL2022} initiated the further refined counting of Dyck paths according to the number of $UUD$-factors; see also \cite[Lemma~2.5]{LK2021} for a previous study on the number of $UUD$-factors over a Dyck path $p$ under the name $\mathsf{segm}(p)$. 

B\'ona et al. introduced
\begin{align}\label{formula for w}
w_{n,k,m} &= \begin{cases}
\frac{1}{k}\binom{n}{k-1}\binom{n-k-1}{m-1}\binom{k}{m}, & \text{if $m>0$, $m\le k$, and $k+m\le n$,}\\
1, & \text{if $m=0$ and $n=k$,}\\
0, & \text{otherwise,}
\end{cases}
\end{align}
and they showed in \cite[Thm.~1.2]{BDL2022} that the number of Dyck paths of semilength $n$ with $k$ $UD$-factors and $m$ $UUD$-factors is given by $w_{n,k,m}$. They went on to investigate more properties of these numbers in the special cases of $n=2k+1$ and $n=2k-1$, such as {\it symmetry} and {\it $\gamma$-positivity} (see section~\ref{sec:gamma}), by noting the connection with Narayana numbers
\begin{align}\label{id:w-Narayana}
w_{2k+1,k,m} &= \binom{2k+1}{k-1}N_{k,m},\\
w_{2k-1,k,m} &= \binom{2k-1}{k-1}N_{k-1,m}.
\label{id:w-Narayana2}
\end{align}
Motivated by their work, we shall consider in this paper a lesser known witness of Catalan numbers, namely the set of cyclic compositions.

Let us start with the notion of composition. A sequence $c=(c_1,c_2,\ldots,c_k)$ of positive integers is said to be a {\it composition} of $n$, if $c_1+c_2+\cdots+c_k=n$. Each $c_i$ is called a {\it part} of $c$ and we use $c\vDash n$ to indicate that $c$ is a composition of $n$. Let $\Comp_{n,k}$ denote the set of all compositions of $n$ with $k$ parts. It is well known that $\abs{\cup_{1\le k\le n}\Comp_{n,k}}=2^{n-1}$ for $n\ge 1$, the total number of compositions of $n$, and $\abs{\Comp_{n,k}}=\binom{n-1}{k-1}$. For our purposes, we also consider a further refinement $\Comp_{n,k,m}$, the set of compositions of $n$ into $k$ parts wherein exactly $m$ parts are larger than $1$. We refer to these parts as ``non-unitary'' in the sequel. Clearly $\Comp_{n,k}=\cup_{m=0}^k\Comp_{n,k,m}$. Given a sequence $s=s_1s_2\cdots s_n$, we say that a sequence $s'$ is a {\it cyclic shift} of $s$ if $s'$ is of the form
$$s'=s_is_{i+1}\cdots s_ns_1s_2\cdots s_{i-1}$$
for some $1\le i\le n$, where $s_{n+1}$ is understood to be $s_1$. Clearly there are $n$ cyclic shifts of $s$ including itself, although they are not necessarily all distinct. We denote $s\sim s'$ whenever $s$ and $s'$ are cyclic shifts of each other. We define a {\it cyclic composition} $[c]$ to be the equivalence class of a composition $c$ under cyclic shifting. Since cyclic shifting preserves the number of parts and the size of each part, it makes sense to introduce $\CComp_{n,k}$, the set of cyclic compositions of $n$ into $k$ parts, as well as its refinement $\CComp_{n,k,m}$. Realizing cyclic compositions as the underlying structure, we are naturally led to the following new interpretation of $w_{n,k,m}$ in terms of plane trees, which can be viewed as our first main result.
\begin{theorem}\label{thm:tree-2}
The number of plane trees with $n$ edges, $k$ internal nodes, and $m$ internal nodes with degree larger than one is given by $w_{n,k,m}$.
\end{theorem}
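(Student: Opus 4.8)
The plan is to reduce Theorem~\ref{thm:tree-2} to a single application of the cycle lemma, after translating all three tree statistics into statistics of an appropriate cyclic word of node degrees. First I would record, for a plane tree $T$ with $n$ edges (hence $n+1$ nodes), the sequence $a=(a_0,a_1,\ldots,a_n)$ of out-degrees of its nodes listed in preorder. Then $\sum_i a_i=n$, the number of internal nodes is $k=\#\{i:a_i\ge 1\}$, the number of leaves is $\ell=n+1-k=\#\{i:a_i=0\}$, and the number of branching internal nodes (degree larger than one) is $m=\#\{i:a_i\ge 2\}$. Thus the three quantities in the statement become, respectively, the numbers of positive, of zero, and of non-unitary entries of $a$; and since cyclic shifting preserves the multiset of entries, the cyclic class of the subsequence of positive entries is a cyclic composition in $\CComp_{n,k,m}$, which is the link to the cyclic-composition viewpoint advertised in the introduction.

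The heart of the argument is the cycle lemma. Setting $b_i=a_i-1\ge -1$, the shifted sequence satisfies $\sum_i b_i=n-(n+1)=-1$. The \L{}ukasiewicz encoding of plane trees says that a sequence $(a_0,\ldots,a_n)$ of non-negative integers summing to $n$ is the preorder degree sequence of a (unique) plane tree if and only if every partial sum $\sum_{i=0}^{j}b_i$ with $j<n$ is non-negative. Were $(b_i)$ to have a nontrivial cyclic period, the total $-1$ would be divisible by the number $r\ge 2$ of periods, which is impossible; hence the $n+1$ cyclic shifts of $(b_i)$ are pairwise distinct, and by the cycle lemma exactly one of them is a valid \L{}ukasiewicz word. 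Consequently the preorder-degree map induces a bijection
\begin{equation*}
\{\text{plane trees with } n \text{ edges}\}\;\longleftrightarrow\;\{a\in\mathbb{Z}_{\ge 0}^{\,n+1}:\textstyle\sum_i a_i=n\}\big/\langle\text{cyclic shift}\rangle,
\end{equation*}
which preserves the triple $(k,\ell,m)$. The point I expect to be the main obstacle to state cleanly is precisely this freeness: one must verify that the cyclic group of order $n+1$ acts without fixed points, so that the naive division by $n+1$ is legitimate and no symmetric degree sequence causes an overcount. This is also the reason the count cannot be phrased through $\CComp_{n,k,m}$ alone, where cyclically symmetric compositions genuinely contribute fractional weights.

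It then remains to enumerate the degree sequences with prescribed $(k,\ell,m)$ and divide by $n+1$. I would choose the $m$ positions carrying a value $\ge 2$, the $k-m$ positions carrying a $1$, and the $\ell=n+1-k$ positions carrying a $0$, in $\binom{n+1}{m,\,k-m,\,n+1-k}$ ways, and then distribute the remaining weight among the $m$ large entries: since these sum to $n-(k-m)$ with each part at least $2$, there are $\binom{n-k-1}{m-1}$ choices. Dividing the product by $n+1$ yields
\begin{equation*}
\frac{1}{n+1}\binom{n+1}{m,\,k-m,\,n+1-k}\binom{n-k-1}{m-1}=\frac{n!}{m!\,(k-m)!\,(n+1-k)!}\binom{n-k-1}{m-1},
\end{equation*}
and a routine rewriting of $\tfrac{1}{k}\binom{n}{k-1}\binom{k}{m}=\tfrac{n!}{m!\,(k-m)!\,(n+1-k)!}$ identifies this with $w_{n,k,m}$, completing the proof.

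As an alternative I would pursue the route indicated in the abstract: begin from the known interpretation of $w_{n,k,m}$ by plane trees counted according to \emph{leaves} (obtained by transporting the $UD$- and $UUD$-statistics through the standard contour bijection between Dyck paths and plane trees), and then apply the involution of Li, Lin, and Zhao to exchange the roles of leaves and internal nodes. The main difficulty along this path is to pin down the tree statistic matching $UUD$-factors and to check that the involution carries it to ``internal nodes of degree larger than one''; I would verify this directly from the explicit local moves of the involution rather than by a generating-function computation.
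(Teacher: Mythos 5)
Your main argument is correct, and it takes a genuinely different route from the paper's first proof (your sketched alternative, on the other hand, \emph{is} essentially the paper's second proof). The paper represents a tree by a pair $(A,c)$, where $c\in\Comp_{n,k,m}$ records the internal-node degrees and $A\in\binom{[n]}{k-1}$ marks which non-root vertices are internal, and constructs an explicit $k$-to-$1$ amalgamation map $\phi\colon\UComp_{n,k,m}\to\cT_{n,k,m}$ (Theorem~\ref{thm:bij-tree-pair}), giving $\abs{\cT_{n,k,m}}=\frac{1}{k}\binom{n}{k-1}\binom{n-k-1}{m-1}\binom{k}{m}$; the freeness issue you rightly single out is handled there at the level of the pair: the $k$ preimages of a tree are indexed by the cyclic shifts of $c$, and even when a shift satisfies $c'=c$ the marked subsets $A\neq A'$ differ because $\gcd(k,k-1)=1$. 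You instead take the full preorder out-degree sequence $(a_0,\ldots,a_n)$, invoke the \L{}ukasiewicz encoding plus the cycle lemma, and divide by $n+1$, with freeness following from the total $-1$ being indivisible by any period $r\ge 2$ --- a cleaner freeness argument resting only on textbook facts. Your arithmetic checks out: $\frac{1}{n+1}\binom{n+1}{m,\,k-m,\,n+1-k}\binom{n-k-1}{m-1}=\frac{n!}{m!\,(k-m)!\,(n+1-k)!}\binom{n-k-1}{m-1}$, and indeed $\frac{1}{k}\binom{n}{k-1}\binom{k}{m}=\frac{n!}{m!\,(k-m)!\,(n+1-k)!}$, so the count equals $w_{n,k,m}$. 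What each approach buys: yours is shorter and self-contained; the paper's yields the representation of trees by a $(k-1)$-subset together with a cyclic composition, which is precisely the structure reused in Section~\ref{sec:gamma} to define the $\bZ_2^n$-action and reprove $\gamma$-positivity, so its extra machinery pays off later. Two minor repairs to your write-up: treat the degenerate case $m=0$ separately (it forces $k=n$ and the unique $n$-chain, matching $w_{n,n,0}=1$; your count via $\binom{n-k-1}{m-1}$ only covers $1\le m\le k$ with $k+m\le n$, exactly as in the paper), and note that the verification you anticipate as the ``main difficulty'' of the alternative route is already available: B\'ona et al. identify $UUD$-factors with good leaves under $\theta$ (Theorem~\ref{thm:tree-1}), and Theorem~\ref{thm:quadruple} checks node-by-node that the Li--Lin--Zhao involution carries $(\leaf,\gleaf)$ to $(\int,\pint)$.
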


We supply two proofs of Theorem~\ref{thm:tree-2} in section~\ref{sec:pf of tree-2}. The first proof is based on a useful representation of plane trees in terms of a cyclic composition jointly with a subset of non-root vertices. This representation outputs directly the formula given in \eqref{formula for w}; the second proof is to apply a recent involution on plane trees constructed by Li, Lin and Zhao~\cite{LLZ2024}. Our first approach using cyclic compositions also affords us with a model that we believe is more amenable to the consideration of a group action, and consequently leads to a new combinatorial proof of the $\gamma$-positivity for Narayana numbers as well as new combinatorial interpretations of their $\gamma$-coefficients. These are the main results of the paper and constitute our section~\ref{sec:gamma}.

\section{Notations and Preliminary results}\label{sec:notation}

We recall the definitions of Dyck paths and plane trees and collect some known results from \cite{BDL2022} that are relevant to this paper.

A Dyck path of semilength $n$ is a path in $\bZ^2$ with two types of allowed steps, namely $(1,1)$ and $(1,-1)$, that starts at the origin $(0,0)$, ends at $(2n,0)$, and never goes below the horizontal axis. As mentioned in the introduction, we use interchangeably a $UD$-word to represent a Dyck path. Using generating function techniques, B\'ona et al. derived the following result.
\begin{theorem}[{B\'ona et al.~\cite[Thm.~1.2]{BDL2022}}]\label{thm:path}
The number of Dyck paths of semilength $n$ with $k$ $UD$-factors and $m$ $UUD$-factors is given by $w_{n,k,m}$.
\end{theorem}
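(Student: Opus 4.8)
The plan is to prove Theorem~\ref{thm:path} combinatorially, by first reducing the $UUD$-statistic to a run-length statistic on the ascent composition and then extracting the product formula \eqref{formula for w} through a cycle-lemma argument whose factor $\frac1k$ matches the cyclic symmetry already visible in the formula.

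First I would pass from the $UD$-word to its run decomposition, writing a Dyck path of semilength $n$ as $U^{a_1}D^{d_1}\cdots U^{a_k}D^{d_k}$ with all $a_i,d_i\ge 1$, $\sum a_i=\sum d_i=n$, and the dominance (Dyck) condition $\sum_{i\le j}d_i\le\sum_{i\le j}a_i$ for $1\le j\le k$. Here $k$ is the number of peaks, and a direct inspection shows that a $UD$-factor occurs exactly once at each peak, while a $UUD$-factor occurs exactly once at each ascent run of length $\ge 2$ and nowhere else. Hence the number of $UUD$-factors equals the number of non-unitary parts of the ascent composition $A=(a_1,\dots,a_k)$, and Theorem~\ref{thm:path} becomes the assertion that the number of Dyck paths with $k$ peaks whose ascent composition has exactly $m$ non-unitary parts is $w_{n,k,m}$.

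The heart of the argument is the following cycle-lemma identity, which I would isolate as a lemma: \emph{for every composition $A\vDash n$ into $k$ parts,}
\begin{equation*}
\sum_{j=0}^{k-1}\#\bigl\{D\vDash n\text{ into }k\text{ parts}:(\sigma^jA,\,D)\text{ satisfies the Dyck condition}\bigr\}=\binom{n}{k-1},
\end{equation*}
\emph{where $\sigma$ sends $(c_1,\dots,c_k)$ to $(c_2,\dots,c_k,c_1)$.} The crucial point is that this sum is \emph{independent of $A$}. I would prove it by encoding the dominance condition through prefix-sum sets: the $k$ cyclic shifts $\sigma^jA$ correspond to the $k$ translates of the prefix-sum set of $A$ inside $\bZ/n$ that contain $0$, the descent compositions correspond to the $k$-subsets of $\bZ/n$ containing $0$, and the Dyck condition becomes an elementwise domination of the two sorted subsets. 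Counting dominating pairs while rotating the ascent set around the cycle is exactly the setting of the Dvoretzky--Motzkin cycle lemma, and the rotation-invariant total equals the number $\binom{n}{k-1}$ of compositions of $n+1$ into $k$ parts. I expect establishing this rotation-invariance to be the main obstacle: the number of valid descent completions for a \emph{fixed} $A$ genuinely varies with $A$ (for $n=3$, $k=2$ the shifts $(2,1)$ and $(1,2)$ admit $2$ and $1$ completions respectively), so the content of the lemma is that these local discrepancies cancel precisely over a full cyclic orbit.

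Granting the lemma, the formula assembles quickly. Since cyclic shifting preserves the number of non-unitary parts, the set $\Comp_{n,k,m}$ is a union of cyclic orbits, and a double count of $\sum_{A\in\Comp_{n,k,m}}\sum_{j}\#\{D:(\sigma^jA,D)\text{ Dyck}\}$ --- once by the lemma, giving $\abs{\Comp_{n,k,m}}\binom{n}{k-1}$, and once by collecting the $k$ preimages of each fixed composition --- yields
\begin{equation*}
w_{n,k,m}=\sum_{A\in\Comp_{n,k,m}}\#\{D:(A,D)\text{ Dyck}\}=\frac{1}{k}\,\abs{\Comp_{n,k,m}}\,\binom{n}{k-1}.
\end{equation*}
Finally I would evaluate $\abs{\Comp_{n,k,m}}=\binom{k}{m}\binom{n-k-1}{m-1}$ (choose the $m$ non-unitary positions, then distribute the surplus $n-k$ among them), which gives $w_{n,k,m}=\frac1k\binom{n}{k-1}\binom{n-k-1}{m-1}\binom{k}{m}$ as in \eqref{formula for w}; the degenerate cases $m=0$ (forcing $n=k$ and the single path $UDUD\cdots UD$) and the support constraints are checked directly. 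As a fallback I would keep the generating-function route used by B\'ona et al., setting up a functional equation for $\sum w_{n,k,m}x^nt^ks^m$ with $s$ marking non-unitary ascents and extracting coefficients by Lagrange inversion; but the cycle-lemma proof above is more transparent and explains the $\frac1k$ structurally, which also makes it the natural stepping stone toward the cyclic-composition viewpoint of the paper.
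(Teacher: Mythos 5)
Your proposal is correct, and it takes a genuinely different route from the paper: the paper never proves Theorem~\ref{thm:path} at all, but imports it from B\'ona et al., who derived it by generating functions, so your argument is a direct combinatorial proof. Your reduction is right (a $UD$-factor occurs once per ascent run, a $UUD$-factor once per ascent run of length at least $2$), and your key cycle-lemma identity is true; to nail down its proof, append one extra $D$ to the path: for the fixed cyclic arrangement of the ascent runs $a_1,\dots,a_k$, the ways to distribute $n+1$ down-steps into the $k$ gaps with every gap nonempty are exactly the compositions of $n+1$ into $k$ parts, counted by $\binom{n}{k-1}$, and Dvoretzky--Motzkin guarantees that each resulting word of $n$ $U$s and $n+1$ $D$s has exactly one rotation of the form (Dyck word)$\,D$; that rotation necessarily begins at the start of some ascent run $a_{j+1}$, and its final descent run automatically has length at least $1$ since a Dyck word cannot end in $U$, so the good rotations are in bijection with the pairs $(j,D)$ in your sum --- this also confirms that summing over $j=0,\dots,k-1$ with multiplicity is the correct convention when $A$ is periodic (your $(2,2)$-type cases). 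The double count then gives $w_{n,k,m}=\frac{1}{k}\binom{n}{k-1}\abs{\Comp_{n,k,m}}$, and $\abs{\Comp_{n,k,m}}=\binom{n-k-1}{m-1}\binom{k}{m}$ is exactly the paper's Proposition~\ref{prop:comp}. Structurally, your argument is the Dyck-path twin of the paper's first proof of Theorem~\ref{thm:tree-2}: in Theorem~\ref{thm:bij-tree-pair} the pairs $(A,c)$ with $A\in\binom{[n]}{k-1}$ and $c\in\Comp_{n,k,m}$ are mapped $k$-to-$1$ onto plane trees by cyclic amalgamation of claws, producing the same factorization with the same $\frac{1}{k}$ explained by the cyclic orbit of the composition. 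What the paper's version buys is the tree model and the cyclic-composition representation that powers the group action of Section~4; what yours buys is a path-level proof of Theorem~\ref{thm:path} itself, bypassing both the trees and the generating functions, and it could equally well replace the citation as the combinatorial foundation of the paper.
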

In the course of constructing a combinatorial proof of the symmetry $w_{2k+1,k,m}=w_{2k+1,k,k+1-m}$ for all $1\le m\le k$, they utilized the following alternative interpretation of $w_{n,k,m}$ in terms of plane trees. A {\it tree} is a simply-connected acyclic graph. All non-empty trees considered in this paper have one designated vertex called the {\it root}. Starting from each vertex $v$ of a tree $T$, there is a unique (shortest) path tracing back to the root of $T$, and the closet node to $v$ on this path, say $u$, is referred to as the parent of $v$, and $v$ is called a child of $u$. A plane tree is a tree where all the children of a given vertex are assigned a certain order from left to right. The number of children adjacent to a node $v$ is called the degree of $v$. A vertex is called a {\it leaf} if it has degree zero (i.e., no child), otherwise it is said to be an {\it internal node}. A leaf $v$ is a {\it good leaf} if $v$ is the left-most child of a non-root vertex. Now we can state the second interpretation of $w_{n,k,m}$.

\begin{theorem}[{B\'ona et al.~\cite[Prop.~3.1]{BDL2022}}]\label{thm:tree-1}
The number of plane trees with $n$ non-root vertices, $k$ leaves, and $m$ good leaves is given by $w_{n,k,m}$.
\end{theorem}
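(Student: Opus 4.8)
The plan is to prove Theorem~\ref{thm:tree-1} by transporting the two statistics through the classical contour bijection between plane trees and Dyck paths, and then invoking Theorem~\ref{thm:path}. Recall that a plane tree $T$ with $n$ edges is encoded as a Dyck path $\Phi(T)$ of semilength $n$ by walking around $T$ in preorder (depth-first, visiting children from left to right): each time an edge is traversed away from the root we record an up-step $U$, and each time it is traversed toward the root we record a down-step $D$. Every edge is traversed exactly once in each direction, so $\Phi(T)$ has exactly $n$ letters $U$ and $n$ letters $D$, and $\Phi$ is a bijection from plane trees with $n$ edges (equivalently, $n$ non-root vertices) onto Dyck paths of semilength $n$.

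First I would verify that $\Phi$ sends leaves to peaks. A non-root vertex $v$ contributes exactly one $U$ (the step descending into $v$ for the first time) and one $D$ (the step ascending out of $v$ back to its parent). The step immediately following the descent into $v$ is a $D$ precisely when $v$ has no children to explore, i.e.\ precisely when $v$ is a leaf; thus leaves are in bijection with $UD$-factors (peaks), and $T$ has $k$ leaves exactly when $\Phi(T)$ has $k$ peaks.

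The heart of the argument is the correspondence between good leaves and $UUD$-factors. I would analyze any occurrence of $UUD$ at three consecutive positions. The trailing $UD$ is a peak and so, by the previous step, marks a descent into a leaf $z$ followed by the ascent out of $z$; write $y$ for the parent of $z$. Because the contour walk visits children from left to right, $z$ is the first child of $y$ to be entered, hence the left-most child of $y$. The leading $U$ is the step immediately preceding the descent into $z$; since the descent into a vertex's left-most child occurs immediately after that vertex is first entered, this leading $U$ must be the descent into $y$, which forces $y$ to possess a parent, i.e.\ $y$ is a non-root vertex. Therefore $z$ is a good leaf. Conversely, every good leaf $z$ (the left-most child of a non-root vertex $y$) produces the factor $UUD$, namely the descent into $y$, the immediate descent into $z$, and the ascent out of the leaf $z$. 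This matches $UUD$-factors with good leaves bijectively, so a tree with $m$ good leaves maps to a path with $m$ occurrences of $UUD$. Combining the three correspondences, $\Phi$ restricts to a bijection between plane trees with $n$ non-root vertices, $k$ leaves and $m$ good leaves, and Dyck paths of semilength $n$ with $k$ $UD$-factors and $m$ $UUD$-factors; Theorem~\ref{thm:path} then yields the count $w_{n,k,m}$.

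I expect the main obstacle to be pinning down the good-leaf correspondence cleanly, in particular making sure that the ``non-root'' hypothesis is exactly what is enforced by requiring the leading $U$, and confirming the boundary behaviour at the root: the left-most child of the root, if it is a leaf, yields the peak $UD$ at the very start of the path with no preceding step, so it is correctly \emph{not} counted as a good leaf and does \emph{not} create a $UUD$-factor. Verifying these edge cases, rather than the bulk of the bijection, is where care is needed.
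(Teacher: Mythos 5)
Your proposal is correct and takes essentially the same route as the paper, which states that Theorem~\ref{thm:tree-1} ``follows immediately'' from Theorem~\ref{thm:path} via the preorder contour bijection $\theta$ (your $\Phi$) without spelling out how the statistics transport. Your verification that leaves correspond to $UD$-factors and good leaves to $UUD$-factors---including the edge case that a leaf which is the leftmost child of the root begins the path with $UD$ and hence correctly yields no $UUD$-factor---is accurate and merely fills in the details the paper leaves to the reader.
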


We remark that theorem~\ref{thm:tree-1} follows immediately from theorem~\ref{thm:path} by applying a well-known bijection denoted $\theta$ from plane trees to Dyck paths. Given a plane tree $T$ with $n$ edges, we traverse its edges in preorder so that each edge gets passed by twice. (I.e., we start with the edge connecting the root with its leftmost child, then traverse this entire subtree, trace this edge again to get back to the root, then continue to the next subtree immediately to the right, so on and so forth.) To each edge passed on the way for the first time there corresponds a step $U$, and to each edge passed on the way for the second time there corresponds a step $D$. This gives us a Dyck path $\theta(T)$ of semilength $n$. For an example of this bijection, the Dyck path and the plane tree in Figure~\ref{fig:Dyck-tree} are connected by $\theta$.

\begin{figure}[!ht]
	\centering
	\subfigure{
		\begin{minipage}[b]{1\textwidth}
			\centering
			\begin{tikzpicture}[scale=0.4]
				\draw[step=1,help lines] ( 0,0 ) grid ( 14, 4);
				\draw[black, ultra thick](0,0)--(3,3);
				\draw[black, ultra thick](3,3)--(5,1);
				\draw[black, ultra thick](5,1)--(6,2);
				\draw[black, ultra thick](6,2)--(7,1);
				\draw[black, ultra thick](7,1)--(9,3);
				\draw[black, ultra thick](9,3)--(10,2);
				\draw[black, ultra thick](10,2)--(11,3);
				\draw[black, ultra thick](11,3)--(14,0);
			\end{tikzpicture}
			\caption*{Dyck path of semilength 7 with 4 $UD$-factors and 2 $UUD$-factors.}
		\end{minipage}
	}
	\hfill
	\subfigure{
		\begin{minipage}[b]{1\textwidth}
			\centering
			\begin{tikzpicture}[scale=0.7]
				\draw[-] (0,0) to (0,1);
				\draw[-] (0,1) to (1,2);
				\draw[-] (1,2) to (1,3);
				\draw[-] (1,2) to (1,1);
				\draw[-] (1,2) to (2,1);
				\draw[-] (2,1) to (1,0);
				\draw[-] (2,1) to (3,0);
				\node at (0,0){$\bullet$};
				\node at (0,1){$\bullet$};
				\node at (1,2){$\bullet$};
				\node at (1,3){$\bullet$};
				\node at (1,1){$\bullet$};
				\node at (2,1){$\bullet$};
				\node at (1,0){$\bullet$};
				\node at (3,0){$\bullet$};
				\draw[color=red]  (0,0) circle (0.3);
				\draw[color=red]  (1,0) circle (0.3);
			\end{tikzpicture} 
			\caption*{Plane tree on 7 non-root vertices with 4 leaves and 2 good leaves (circled in red).}
		\end{minipage}
	}
	\caption{A Dyck path and its corresponding plane tree under the map $\theta$}\label{fig:Dyck-tree}
\end{figure}

Note that our theorem~\ref{thm:tree-2} supplies a third combinatorial interpretation of $w_{n,k,m}$, again in terms of plane trees but with the role played by leaves replaced by internal nodes. So theorem~\ref{thm:tree-1} and theorem~\ref{thm:tree-2} are in some sense dual to each other. Indeed, our second proof of theorem~\ref{thm:tree-2} utilizes a reflection-like involution due to Li-Lin-Zhao~\cite{LLZ2024}, thereby illustrats such a duality.

Next, we introduce further notations and collect some enumerative results concerning cyclic compositions and their representatives called ($k$-)dominant compositions.

Given a positive integer $k$ and a sequence $s=s_1s_2\cdots s_l$ composed of only $U$s and $D$s, we say that $s$ is {\it $k$-dominating} if for every $1\le i\le l$, the prefix $s_1s_2\cdots s_i$ of $s$ has more copies of $U$ than $k$ times the number of copies of $D$. Furthermore, each composition $c=(c_1,c_2,\ldots,c_l)$ corresponds naturally to a unique $UD$-sequence, namely
$$\tau(c):=U^{c_1}DU^{c_2}D\cdots U^{c_l}D.$$
We say a composition $c$ is {\it $k$-dominating} whenever its image under $\tau$ is a $k$-dominating $UD$-sequence. Alternatively, we have the following characterization of $k$-dominance for compositions, which immediately follows from the definitions of the map $\tau$ and the original $k$-dominance for $UD$-sequences.


For notational convenience, we introduce an operator $f_k$. For any composition $c=(c_1,c_2,\ldots,c_l)$, we let $f_k(c;i,j):=\sum_{t=i}^j (c_t-k)$, $1\le i\le j\le l$, then we have $f_k(c;i,j)=\sum_{t=i}^{j} f_k(c;t,t)$. In particular, we simply write $f$ for the operator $f_2$. And likewise, {\it dominating} simply means $2$-dominating, unless otherwise noted.

\begin{proposition}\label{char:k-domi}
A composition $c=(c_1,c_2,\ldots,c_l)$ is $k$-dominating, if and only if for each $1\le i\le l$, we have $f_k(c;1,i)>0$.
\end{proposition}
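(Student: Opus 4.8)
The plan is to read off the condition directly from the lattice-path picture of $\tau(c)$ and to notice that the binding inequalities occur exactly at the reading positions immediately after each $D$-step. To this end I would introduce the height statistic: for a nonempty prefix $p$ of the word $\tau(c)=U^{c_1}DU^{c_2}D\cdots U^{c_l}D$, let $h(p)$ be the number of $U$'s in $p$ minus $k$ times the number of $D$'s in $p$. By the very definition of $k$-dominance for $UD$-sequences, $c$ is $k$-dominating if and only if $h(p)>0$ for every nonempty prefix $p$ of $\tau(c)$; along the word each $U$ raises $h$ by $1$ and each $D$ lowers it by $k$. The one computation I need is that the prefix ending right after the $i$-th $D$-step, namely $U^{c_1}D\cdots U^{c_i}D$, has height $(c_1+\cdots+c_i)-ki=f_k(c;1,i)$.

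With this in place the forward implication is immediate: if $\tau(c)$ is $k$-dominating, then in particular each distinguished prefix ending after a $D$ has positive height, which is precisely $f_k(c;1,i)>0$ for all $1\le i\le l$.

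For the converse I would classify the prefixes by shape. Every nonempty prefix of $\tau(c)$ either ends immediately after the $i$-th $D$, in which case its height is $f_k(c;1,i)>0$ by hypothesis, or it ends inside the $a$-th run of $U$'s, so it has the form $U^{c_1}D\cdots U^{c_{a-1}}DU^{j}$ for some $1\le j\le c_a$, with height $f_k(c;1,a-1)+j$. Here $f_k(c;1,a-1)\ge 0$ — it equals $0$ when $a=1$ under the empty-sum convention $f_k(c;1,0)=0$, and is strictly positive when $a\ge 2$ by hypothesis — while $j\ge 1$ because every part satisfies $c_t\ge 1$. Hence such a prefix has height at least $1$. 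Thus all prefix heights are positive and $c$ is $k$-dominating, completing the equivalence.

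I do not anticipate a real obstacle; the only point deserving care is the bookkeeping of which prefixes could possibly be the binding ones. The clean fact that settles this is that within each maximal run of $U$'s the height strictly increases, so the minimum of $h$ over all prefixes is governed entirely by the post-$D$ values $f_k(c;1,i)$ (together with the trivially positive single-$U$ starting value $1$); once these are known to be positive, the in-run prefixes are positive by the one-line bound above. I would also make sure to record the convention $f_k(c;1,0)=0$ explicitly so that the $a=1$ case of the converse reads unambiguously.
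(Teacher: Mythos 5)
Your proof is correct and matches the paper exactly: the paper states this proposition with no written proof, asserting it ``immediately follows from the definitions of the map $\tau$ and the original $k$-dominance for $UD$-sequences,'' and your prefix-height argument—observing that heights strictly increase within each $U$-run, so the binding prefixes are precisely those ending just after a $D$-step, where the height equals $f_k(c;1,i)$—is the standard unpacking of that one-line claim. Your explicit handling of the convention $f_k(c;1,0)=0$ is also consistent with the convention the paper adopts later for the operator $f$.
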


Next, we would like to pick out a unique representative for each cyclic composition $[c]\in\CComp_{2n+1,n}$, which turns out to be convenient for applying our group action in section~\ref{sec:gamma}. Our choice is to take the dominating ones, and this is justified by the following cycle lemma.

\begin{lemma}[Cycle lemma~\cite{DM1947}] 
Let $k$ be a positive integer. For any sequence $s=s_1s_2\cdots s_{n+m}$ consisting of $m$ copies of $U$ and $n$ copies of $D$, there are exactly $m-kn$ cyclic shifts of $s$ that are $k$-dominating.
\end{lemma}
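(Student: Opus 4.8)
The plan is to translate $k$-dominance into a statement about partial sums and then establish the corresponding cycle lemma by a weight-merging induction. Assign to $s=s_1\cdots s_{n+m}$ the weights $a_t=1$ if $s_t=U$ and $a_t=-k$ if $s_t=D$, and set $P_i=\sum_{t=1}^{i}a_t$. Since $P_i$ is the number of $U$'s minus $k$ times the number of $D$'s among the first $i$ letters, $s$ is $k$-dominating precisely when $P_i>0$ for every $1\le i\le n+m$, and the total weight is $P_{n+m}=m-kn=:p$. Reading partial sums cyclically, the shift beginning at position $t+1$ has partial sums $P_{t+i}-P_t$, so it is $k$-dominating exactly when $P_t$ is strictly smaller than every partial sum following it within one full turn. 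If $p\le 0$ no shift can qualify, since the partial sum after a complete turn equals $p\le 0$; hence the count is $\max(p,0)$, and it remains to treat $p\ge 1$, where the asserted value is $m-kn=p$.

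I would then prove the general cycle lemma in the form: if $a_1,\dots,a_N$ are integers with each $a_t\le 1$ and $\sum_t a_t=p\ge 1$, then exactly $p$ of the $N$ cyclic shifts have all partial sums positive. The engine is a reduction that lowers $N$ by one while preserving both $p$ and the number of good shifts. Since the entries are $\le 1$ with positive sum, unless all entries equal $1$ there is (cyclically) an index $i$ with $a_i=1$ and $a_{i+1}\le 0$; I merge this pair into the single entry $a_i+a_{i+1}\le 1$. No good shift of the original can begin at position $i+1$, as its first partial sum would be $a_{i+1}\le 0$. For every other starting point the merge suppresses exactly one intermediate partial sum, namely the value $T+1$ reached right after the $a_i=1$ step (with $T$ the running sum just before it); since $a_{i+1}\le 0$ we have $T+1\ge T+1+a_{i+1}$, so positivity of the retained merged value $T+1+a_{i+1}$ already forces positivity of the suppressed $T+1$. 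Thus a shift not starting at $i+1$ is good before the merge iff the corresponding shift is good after it, giving a bijection between the good shifts on the two sides. Iterating until every entry equals $1$ leaves a sequence of length $N=p$ whose every rotation has partial sums $1,2,\dots,p$, all positive; tracing back, the original sequence has exactly $p$ good shifts.

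Applying this with $a_t\in\{1,-k\}$ and $p=m-kn$ yields that exactly $m-kn$ cyclic shifts of $s$ are $k$-dominating, as claimed, with the count read as $0$ when $m-kn\le 0$.

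I expect the main obstacle to be making the merge step fully rigorous in all cases: verifying that the positivity conditions are genuinely equivalent after deleting the intermediate partial sum, correctly handling the cyclic wrap-around when the chosen pair straddles the end of the word, and confirming that the bookkeeping of starting positions survives even when $s$ is periodic, so that the $N$ rotations need not be distinct. A secondary subtlety is the precise meaning of the claimed value $m-kn$ when it is non-positive; I would state explicitly that in that regime there are no $k$-dominating shifts, consistent with interpreting the formula as $\max(m-kn,0)$.
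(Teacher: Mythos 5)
Your proposal is correct, but note that the paper does not actually prove this lemma: it is stated as a known result with a citation to Dvoretzky and Motzkin \cite{DM1947}, and is then only \emph{applied} (via the map $\tau$ and Proposition~\ref{char:k-domi}) to pick out the unique dominating representative in each class of $\CComp_{2n+1,n}$. What you have reconstructed is essentially the classical merging proof of the cycle lemma (the argument of Dvoretzky--Motzkin, in the streamlined form popularized by Dershowitz and Zaks), and it checks out at every step: the translation of $k$-dominance into strict positivity of the partial sums $P_i$ with weights $1$ and $-k$ matches the paper's Proposition~\ref{char:k-domi}; when not all entries equal $1$, an adjacent cyclic pair $(a_i,a_{i+1})=(1,\le 0)$ exists because the entries are integers $\le 1$ with positive sum, so both types occur; the start $i+1$ is never good since its first partial sum is $a_{i+1}\le 0$; and the suppressed value $T+1$ dominates the retained value $T+1+a_{i+1}$, so the merge preserves the set of good starting positions, with the base case $a_1=\cdots=a_p=1$ giving exactly $p$ good shifts. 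Your bookkeeping by starting position (rather than by distinct words) is exactly the convention the paper sets up in Section~\ref{sec:notation} (``there are $n$ cyclic shifts of $s$ including itself, although they are not necessarily all distinct''), so periodicity causes no trouble, and in the paper's application $\gcd(2n+1,n)=1$ makes all shifts distinct anyway. Your one caveat is apt: as literally stated the lemma implicitly assumes $m\ge kn$ (the count must be read as $\max(m-kn,0)$ otherwise), though in the paper's use one always has $m-kn=(2n+1)-2n=1$, which is precisely why the dominating representative is unique. In short: the paper buys the lemma by citation; your argument supplies a correct, self-contained proof of it, at the cost of a page of (routine) verification that the paper deliberately outsources.
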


\begin{definition}
For each positive integer $n$, we let $\DComp_n$ be the set of ($2$-)dominating compositions in $\Comp_{2n+1,n}$. Similarly, for each $1\le m\le n$, we denote $\DComp_{n,m}$ the set of dominating compositions in $\Comp_{2n+1,n,m}$.
\end{definition}

Applying the cycle lemma and with the map $\tau$ in mind, we see that each cyclic composition $[c]\in\CComp_{2n+1,n}$ has precisely one cyclic shift, say $c'$, that belongs to $\DComp_n$. In particular, we see that $|\DComp_n|=|\CComp_{2n+1,n}|$. For example, among the four choices contained in the class
$$[(2,3,1,3)]=\{(2,3,1,3),(3,1,3,2),(1,3,2,3),(3,2,3,1)\}\in\CComp_{9,4},$$
the only dominating composition is $(3,2,3,1)$. Moreover, relying on the fact that $n$ and $2n+1$ are coprime with each other, we see all $n$ cyclic shifts of a given composition $c\in\Comp_{2n+1,n}$ are distinct, hence
\begin{align*} 
\abs{\DComp_n}=\abs{\CComp_{2n+1,n}}=\frac{1}{n}\abs{\Comp_{2n+1,n}}=\frac{1}{n}\binom{2n}{n-1}=\frac{1}{n+1}\binom{2n}{n},
\end{align*}
rendering $\set{\DComp_n}_{n\ge 0}$ a Catalan family. (I.e., combinatorial objects that are enumerated by Catalan numbers).

Next we introduce the notions of smoothness and L-smoothness for any given integer sequence, as defined by Mansour and Shattuck~\cite{MS2023}.
\begin{definition}
An integer sequence $w=w_1\ldots w_n$ is said to be {\it smooth} if $|w_{i+1}-w_i|\le 1$ for each $1\le i\le n-1$. Moreover, it is said to be {\it L-smooth} if the weaker condition $w_{i+1}-w_i\ge -1$ holds for all $1\le i\le n-1$.
\end{definition}
The following two propositions become handy for our later arguments in section~\ref{sec:gamma} that utilize the L-smoothness.
\begin{proposition}\label{down-by-one}
Suppose $w=w_1\ldots w_n$ is an L-smooth sequence of integers, and $w_i>w_j$ for certain $1\le i<j\le n$, then there exists an index $k$, $i\le k<j$, such that $w_k=w_i$ and $w_{k+1}=w_k-1$.
\end{proposition}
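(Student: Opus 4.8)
The plan is to pin down the required index as the \emph{last} position before $j$ at which the sequence stays at or above the value $w_i$, and then to exploit L-smoothness to show that the step immediately after this position must be a descent of exactly one unit. Write $a:=w_i$ for brevity. Since $w_i>w_j$, the sequence must decrease from $a$ to something strictly below $a$ as the index runs from $i$ to $j$, and L-smoothness guarantees that any such decrease proceeds through unit steps; the content of the proposition is simply to locate one such unit step emanating from the value $a$ itself.

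Concretely, I would set
\[
k:=\max\set{t : i\le t\le j-1,\ w_t\ge a}.
\]
First I would note that this set is nonempty, since $t=i$ lies in it (indeed $w_i=a\ge a$, and $i\le j-1$ because $i<j$); hence $k$ is well defined and satisfies $i\le k\le j-1<j$, as required. Next I would argue that $w_{k+1}<a$: if $k+1\le j-1$, this follows from the maximality of $k$, while if $k+1=j$ it follows from the hypothesis $w_{k+1}=w_j<a$. In either case $w_{k+1}\le a-1$.

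The remaining step is to combine this upper bound with L-smoothness to force equalities. Since $w=w_1\ldots w_n$ is L-smooth, we have $w_{k+1}-w_k\ge -1$, that is $w_{k+1}\ge w_k-1\ge a-1$, where the last inequality uses $w_k\ge a$ (which holds by the definition of $k$). Together with $w_{k+1}\le a-1$ this yields $w_{k+1}=a-1$. Feeding this back into $w_{k+1}\ge w_k-1$ gives $a-1\ge w_k-1$, so $w_k\le a$, and combined with $w_k\ge a$ we conclude $w_k=a=w_i$. Therefore $w_k=w_i$ and $w_{k+1}=a-1=w_k-1$, which is exactly the claimed conclusion.

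I do not anticipate a genuine obstacle here; the only point requiring care is the handling of the boundary case $k+1=j$ in establishing $w_{k+1}<a$, since the defining set for $k$ ranges only up to $j-1$ and the maximality argument alone does not cover the index $j$. Apart from that bookkeeping, the proof is a short, self-contained squeeze argument driven entirely by the single-sided Lipschitz bound defining L-smoothness.
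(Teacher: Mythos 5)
Your proof is correct. It follows the same extremal-index strategy as the paper but with a different extremal set, and the difference is worth noting. The paper takes $m:=\max\set{t: i\le t<j,\ w_t=w_i}$, so $w_m=w_i$ holds by definition and only the unit descent $w_{m+1}=w_m-1$ needs proof; that step is argued by contradiction via a discrete intermediate-value property (an L-smooth sequence cannot descend from above the value $w_m$ to below it without hitting $w_m$), which is itself a consequence of L-smoothness that the paper leaves informal, and the possibility $w_{m+1}=w_m$ together with the boundary case $m+1=j$ is passed over silently. You instead take $k:=\max\set{t: i\le t\le j-1,\ w_t\ge w_i}$; maximality (plus the hypothesis $w_j<w_i$ when $k+1=j$, a boundary case you correctly isolate) yields $w_{k+1}\le w_i-1$ outright, and the single L-smoothness bound $w_{k+1}\ge w_k-1\ge w_i-1$, together with integrality, then squeezes out both conclusions $w_{k+1}=w_i-1$ and $w_k=w_i$ at once. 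What the paper's choice buys is that one of the two conclusions is definitional; what yours buys is a direct, fully self-contained chain of inequalities using nothing beyond the defining bound $w_{t+1}-w_t\ge -1$ --- in effect you have made rigorous precisely the intermediate-value step that the paper only gestures at. Either version serves the later applications (e.g.\ the construction of anchor points) equally well.
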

\begin{proof}
We define the set $S_{i,j}:=\set{k:i\le k<j,~w_k=w_i}$. Clearly $i\in S_{i,j}$, so this set is non-empty, and we can take $m:=\max{S_{i,j}}$. One verifies that $w_{m+1}=w_m-1$, making $m$ a qualified index. Indeed, if $w_{m+1}>w_m=w_i>w_j$, then there is no way to go from $w_{m+1}$ down to $w_j$ without hitting the value $w_m$, since $w$ is L-smooth. So we must have $w_{m+1}<w_m$, and again L-smoothness forces that $w_{m+1}=w_m-1$, as desired.
\end{proof}

\begin{proposition}\label{L-smooth}
For any composition $c=(c_1, c_2, \ldots, c_n)$, the sequence $f(c;1,1),\\f(c;1,2),\ldots,f(c;1,n)$ is L-smooth. In particular, for every $1\le i\le n$, $f(c;1,i)-f(c;1,i-1)=-1$ if and only if $c_i=1$, where we set $f(c;1,0)=0$ as convention.
\end{proposition}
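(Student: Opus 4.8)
The plan is to reduce both assertions to a single elementary observation: the consecutive differences of the sequence $\{f(c;1,i)\}_{i}$ are controlled directly by the individual parts $c_i$. First I would compute, for each $1\le i\le n$, the difference $f(c;1,i)-f(c;1,i-1)$ using the additive definition $f(c;1,i)=\sum_{t=1}^{i}(c_t-2)$ (recalling that $f$ abbreviates $f_2$). The sum telescopes, leaving
\[
f(c;1,i)-f(c;1,i-1)=c_i-2,
\]
valid for all $1\le i\le n$ once we adopt the stated convention $f(c;1,0)=0$, so that the boundary case $i=1$ reads $f(c;1,1)=c_1-2$, consistent with the empty sum.

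With this identity in hand, both parts follow immediately. Since $c$ is a composition, every part is a positive integer, so $c_i\ge 1$ and hence $c_i-2\ge -1$; this is exactly the L-smoothness inequality $f(c;1,i)-f(c;1,i-1)\ge -1$ required by the definition. For the finer ``in particular'' claim, the same difference equals $-1$ if and only if $c_i-2=-1$, that is, if and only if $c_i=1$, which is precisely the asserted characterization.

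I do not anticipate any genuine obstacle: the statement is a direct consequence of the telescoping difference together with the positivity of the parts of a composition. The only points requiring minor care are the boundary convention at $i=1$, which the stipulation $f(c;1,0)=0$ resolves, and keeping track of the constant shift by $2$ that arises from $f=f_2$ (as opposed to a general $f_k$, where the difference would instead be $c_i-k$ and the relevant threshold would shift accordingly).
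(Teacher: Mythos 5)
Your proof is correct and follows essentially the same route as the paper: both compute the consecutive difference $f(c;1,i)-f(c;1,i-1)=f(c;i,i)=c_i-2\ge -1$ and read off the equality case $c_i=1$. Your added care about the boundary convention $f(c;1,0)=0$ and the general-$k$ remark are fine but do not change the argument.
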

\begin{proof}
A direct computation according to the definition of the operator $f$ (=$f_2$) shows that for any $1\le i\le n$, the increment $f(c;1,i)-f(c;1,i-1)=f(c;i,i)=c_i-2\ge -1$. And the equal sign holds if and only if $c_i=1$, as claimed.
\end{proof}

As already noted by B\'ona et al., one can associate each dominating composition $c\in\DComp_{n,m}$ bijectively with a Dyck path of semilength $n$ and $m$ peaks, thus showing that (see~\cite[Lemma~4.1]{BDL2022})
\begin{align}\label{Dcomp is Nara}
|\DComp_{n,m}|=N_{n,m},
\end{align}
the Narayana number. To keep this paper self-contained and to make dominant compositions the truly focal point, here we rederive \eqref{Dcomp is Nara} by taking a more direct approach to consider the {\it Narayana polynomial} $N_n(t):=\sum_{1\le m\le n}N_{n,m}t^m$, which is known (see \cite[Sect.~2.3]{Pet2015}) to satisfy the following recurrence. Let $N_0(t):=1$, we have $N_1(t)=t$ and for $n\ge 2$,
\begin{align}\label{rec:Narapoly}
N_n(t) &=tN_{n-1}(t)+\sum_{i=0}^{n-2} N_i(t)N_{n-1-i}(t).
\end{align}



Let $\nonu(c)$ be the number of non-unitary parts of $c$. We define $C_0(t):=1$ and $C_n(t):= {\textstyle \sum_{c\in \DComp_n}t^{\nonu(c)}}$, then comparing the following result with \eqref{rec:Narapoly} immediately yields \eqref{Dcomp is Nara} and justifies the pair $(\DComp_n ,\nonu)$ as a new witness of Narayana polynomial $N_n(t)$.
\begin{proposition}
	For $n\ge 2$, there holds $$C_n(t)=tC_{n-1}(t)+C_{n-1}(t)+\sum_{i=1}^{n-2}C_i(t)C_{n-1-i}(t).	$$
\end{proposition}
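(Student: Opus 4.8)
The plan is to give a first-return decomposition of dominating compositions that mirrors the standard first-return decomposition of Dyck paths underlying \eqref{rec:Narapoly}. Throughout, for $c=(c_1,\dots,c_n)\in\DComp_n$ I track the walk $g_i:=f(c;1,i)$ with the convention $g_0=0$. By Proposition~\ref{char:k-domi} dominance is equivalent to $g_i\ge 1$ for all $1\le i\le n$, while the total $\sum_{t=1}^n(c_t-2)=(2n+1)-2n=1$ forces $g_n=1$; in particular $g_1=c_1-2\ge1$, so the first part always satisfies $c_1\ge3$. Proposition~\ref{L-smooth} moreover tells me this walk is L-smooth and decreases by exactly $1$ precisely at the unitary parts, a structural fact I will use to pin down the head below. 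I let $p$ be the first index with $g_p=1$ (well defined since $g_n=1$), split $c$ into the head $\alpha:=(c_1,\dots,c_p)$ and the tail $\beta:=(c_{p+1},\dots,c_n)$, and record the additivity $\nonu(c)=\nonu(\alpha)+\nonu(\beta)$. The remaining work is to identify the generating functions of the two pieces.

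For the tail, $\beta$ has $q:=n-p$ parts summing to $2q$, and its shifted walk $g_{p+j}-g_p$ $(0\le j\le q)$ starts and ends at $0$ while staying $\ge0$; let $E_q$ denote the set of such compositions (with $E_0$ the empty composition). I will show $\sum_{\beta\in E_q}t^{\nonu(\beta)}=C_q(t)$. Since the shifted walk stays $\ge0$ and its first step is $d_1-2$, every $\beta=(d_1,\dots,d_q)\in E_q$ has $d_1\ge2$; the map $\beta\mapsto(d_1+1,d_2,\dots,d_q)$ then lands in $\DComp_q$ (it raises the whole walk by $1$, so it stays $\ge1$ and ends at $1$) and is reversed by subtracting $1$ from the first part of a member of $\DComp_q$, which is legitimate because $c_1\ge3$ there. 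As $d_1\ge2$, this adjustment leaves $\nonu$ unchanged, giving the claimed identity, and for $q=0$ the value $C_0(t)=1$.

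For the head, $\alpha\in\DComp_p$ is \emph{irreducible} in the sense that $g_i\ge2$ for $1\le i\le p-1$ and $g_p=1$. When $p=1$ the only possibility is $(3)$, contributing $t$. When $p\ge2$, L-smoothness forces the final step $c_p-2=1-g_{p-1}=-1$, so $c_p=1$ and $g_{p-1}=2$; deleting this trailing $1$ and decrementing the first part gives $(c_1-1,c_2,\dots,c_{p-1})\in\DComp_{p-1}$ (the walk drops uniformly by $1$, staying $\ge1$ and ending at $1$), with inverse $b\mapsto(b_1+1,b_2,\dots,b_{p-1},1)$. Since $c_1\ge4$ here, both the decrement and the deleted unit leave $\nonu$ unchanged, so the irreducible compositions of length $p\ge2$ contribute $C_{p-1}(t)$.

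Assembling the decomposition yields $C_n(t)=t\,C_{n-1}(t)+\sum_{p=2}^{n}C_{p-1}(t)\,C_{n-p}(t)$; reindexing $i=p-1$ turns the sum into $\sum_{i=1}^{n-1}C_i(t)C_{n-1-i}(t)$, and peeling off the $i=n-1$ term (which equals $C_{n-1}(t)C_0(t)=C_{n-1}(t)$) produces exactly the stated recurrence. I expect the main obstacle to be the bookkeeping around the $\pm1$ adjustments of the first part: one must verify simultaneously that they preserve membership in $\DComp$ (keeping the walk in the correct region with the correct endpoint) and that they preserve $\nonu$, which hinges on the uniform bounds $c_1\ge3$ in the tail and $c_1\ge4$ in the nontrivial head, together with the edge cases $p=1$ and $q=0$.
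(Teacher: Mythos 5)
Your proof is correct, and it takes a genuinely different (though closely related) route from the paper's. The paper peels off the \emph{last} part: it first observes that $c_n\in\{1,2\}$ for any $c\in\DComp_n$, and then argues by three cases --- $c_n=2$ is deleted (term $tC_{n-1}(t)$); $c_n=1$ with $f(c;1,i)>1$ for all $i\le n-1$ is deleted while $c_1$ is decremented (term $C_{n-1}(t)$); otherwise $c$ is split at the \emph{largest} $m\le n-1$ with $f(c;1,m)=1$ into two dominant compositions (the summation, where the maximality of $m$ is what makes the second factor dominant). You instead perform a uniform first-return factorization at the \emph{smallest} $p$ with $f(c;1,p)=1$, and then normalize the two factors by separate bijections: the tail is lifted into $\DComp_{n-p}$ by incrementing its first part, and the irreducible head of length $p\ge 2$ is reduced to $\DComp_{p-1}$ by deleting its forced trailing $1$ and decrementing its first part (the paper's case-2 move, deployed for a different purpose). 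Consequently the three terms of the recurrence are explained by different objects in the two proofs: $tC_{n-1}(t)$ arises in yours from the head $(3)$ with $p=1$ rather than from $c_n=2$, and $C_{n-1}(t)$ appears as the boundary term $C_{n-1}(t)C_0(t)$ of your product sum ($p=n$, empty tail) rather than as a separate case. Your bookkeeping is sound: the bounds $c_1\ge 3$ (from $f(c;1,1)\ge 1$) and $c_1\ge 4$ (from $f(c;1,1)\ge 2$ when $p\ge 2$) do guarantee that the $\pm 1$ adjustments preserve both dominance and $\nonu$, and L-smoothness correctly forces $c_p=1$ and $f(c;1,p-1)=2$ at the first return. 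What each approach buys: the paper's case analysis is shorter and needs no auxiliary classes, while yours is structurally cleaner --- a single decomposition visibly mirroring the classical first-return recursion \eqref{rec:Narapoly} for Narayana polynomials --- at the modest price of verifying the two normalization bijections.
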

\begin{proof}
Given a $c=\left(c_1, c_2, \ldots, c_n\right)\in \DComp_{n}$, its last entry can only be $1$ or $2$. In fact if $c_n\ge 3$, then $f(c;1,n)=f(c;1,n-1)+c_n-2\ge 2$, which contradicts with $f(c;1,n)=2n+1-2n=1$. We consider the following three cases:
\begin{itemize}
\item If $c_n=2$, then removing it from $c$ gives us $(c_1, c_2, \ldots, c_{n-1})\in\DComp_{n-1}$.
\item If $c_n=1$ and for every $1\le i\le n-1$, we have $f(c;1,i)>1$, then removing $c_n$ and subtracting $c_1$ by $1$ we get $(c_1-1, c_2, \ldots, c_{n-1})\in\DComp_{n-1}$.
\item Otherwise $c_n=1$, and there exists a certain $j$, $1\le j\le n-1$, such that $f(c;1,j)=1$. Let $m$ be the largest such $j$, then one checks that both $a:=(c_1, c_2,\ldots, c_m)$ and $b:=(c_{m+1}, c_{m+2},\ldots, c_{n-1})$ are dominant compositions. Note the maximality of $m$ is needed in showing the dominance of composition $b$.
\end{itemize}

For the first case, we can append $2$, a non-unitary part, to the right of a given $c\in \DComp_{n-1}$, and recovers uniquely a dominant composition in $\DComp_n$, so this case explains the term $tC_{n-1}(t)$.
	
The second case is seen to be revertible by a similar argument. Namely, given a composition from $\DComp_{n-1}$, we append $1$ to its right and increase the first part by $1$, to uniquely recover a dominant composition satisfying the condition of case 2. This corresponds to the term $C_{n-1}(t)$.
	
Finally for the last case, we concatenate two non-empty dominant compositions of length $i$ and $n-1-i$ respectively to get a new composition, and append $1$ to its end. This gives us a dominant composition of length $n$ and explains the summation in the recurrence.
\end{proof}


We end this preliminary section with a direct enumeration of $\Comp_{n,k,m}$.
\begin{proposition}\label{prop:comp}
For $1\le m\le k <n$, we have
\begin{align}\label{id:Compnkm}
|\Comp_{n,k,m}|=\binom{n-k-1}{m-1}\binom{k}{m}.
\end{align}
In particular, the following Chu-Vandermonde identity holds. 
$$\sum_{m=1}^k\binom{n-k-1}{m-1}\binom{k}{m}=\binom{n-1}{k-1},$$
since $\Comp_{n,k}=\cup_{1\le m\le k}\Comp_{n,k,m}$ for $n>k\ge 1$.
\end{proposition}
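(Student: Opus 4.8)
The plan is to count $\Comp_{n,k,m}$ directly by separating the positions holding the unit parts from those holding the non-unitary parts. First I would choose which $m$ of the $k$ available positions are to hold the non-unitary (i.e.\ $\ge 2$) parts; there are $\binom{k}{m}$ such choices, and this accounts for the second binomial factor. The remaining $k-m$ positions are each forced to equal $1$, so they contribute exactly $k-m$ to the total sum $n$, regardless of where they sit.

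Next I would count the ways to fill the $m$ chosen positions. Since the unit parts already contribute $k-m$, the $m$ non-unitary parts form a composition of $n-(k-m)=n-k+m$ into $m$ parts, each part being at least $2$. To reduce this to a standard stars-and-bars count, I would substitute $d_t=c_t-1$ for each such part, turning the problem into counting compositions of $(n-k+m)-m=n-k$ into $m$ positive parts. The number of these is $\binom{n-k-1}{m-1}$, the first binomial factor. Multiplying the two independent counts yields $\binom{n-k-1}{m-1}\binom{k}{m}$, as claimed.

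There is no genuine obstacle here; the only points requiring care are the index bookkeeping in the substitution and the boundary behaviour. In particular I would note that the formula remains correct when $n-k<m$: in that regime no composition with $m$ non-unitary parts can exist, since they would require total weight at least $2m>n-k+m$, and correspondingly $\binom{n-k-1}{m-1}=0$, so both sides vanish. This is what lets the single closed form cover the whole stated range $1\le m\le k<n$.

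Finally, for the ``in particular'' claim I would observe that $\Comp_{n,k}=\bigsqcup_{m=1}^{k}\Comp_{n,k,m}$ is a disjoint union, so summing the cardinalities just computed against the known total $|\Comp_{n,k}|=\binom{n-1}{k-1}$ gives $\sum_{m=1}^{k}\binom{n-k-1}{m-1}\binom{k}{m}=\binom{n-1}{k-1}$. Rewriting $\binom{k}{m}=\binom{k}{k-m}$, this is exactly the Chu--Vandermonde convolution $\sum_{m}\binom{n-k-1}{m-1}\binom{k}{k-m}=\binom{n-1}{k-1}$, with the lower indices summing to $k-1$; so this part is a reassuring consistency check that falls out of the disjoint decomposition rather than a separate assertion to be established.
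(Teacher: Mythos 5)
Your proposal is correct and follows essentially the same route as the paper's proof: fix which $m$ of the $k$ positions carry the non-unitary parts (the factor $\binom{k}{m}$) and reduce the values to a stars-and-bars count giving $\binom{n-k-1}{m-1}$; the only cosmetic difference is that you shift by $d_t=c_t-1$ to count positive compositions of $n-k$, while the paper shifts by $y_j=x_{i_j}-2$ to count nonnegative solutions summing to $n-k-m$, which are the same count. Your extra observations (the degenerate case $n-k<m$ and the disjointness of the union) are sound and consistent with the paper.
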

\begin{proof}
Each composition $c\in\Comp_{n,k,m}$ gives a solution to the Diophantine equation
\begin{align}\label{dio1}
x_1+x_2+\cdots+x_k=n,
\end{align}
with all $x_i\ge 1$ and precisely $m$ of them are strictly larger than $1$. Say these are $x_{i_1},x_{i_2},\ldots,x_{i_m}$. We subtract all $1$'s to the right of \eqref{dio1}, make the change of variables $y_j:=x_{i_j}-2$ and record the resulting equation
\begin{align}\label{dio2}
y_1+y_2+\cdots+y_m=n-k-m.
\end{align}
Note that now each $y_j\ge 0$ and \eqref{dio2} has $\binom{n-k-1}{m-1}$ non-negative solutions. Finally, each non-negative solution of \eqref{dio2} gives rise to $\binom{k}{m}$ qualified solutions of \eqref{dio1} since we have to determine where to place those $k-m$ deleted variables $x_i=1$, besides recovering $x_{i_j}$ from $y_j$.
\end{proof}

\section{Two proofs of Theorem~\ref{thm:tree-2}}\label{sec:pf of tree-2}
We abuse the notation a bit to denote $\binom{S}{i}$ the set of all $i$-element subsets of a given set $S$. The first proof of theorem~\ref{thm:tree-2} contains two steps. The first step is to enumerate all the pairs $(A,c)$, where $c\in\Comp_{n,k,m}$ and $A\in\binom{[n]}{k-1}$ with $[n]:=\{1,2,\ldots,n\}$. Denote the set of such pairs by $P_{n,k,m}$. For an internal node from a plane tree, we say it is unitary if its degree is one, otherwise it is non-unitary. Let $\cT_{n,k,m}$ be the set of plane trees with $n$ edges, $k$ internal nodes, $m$ of which are non-unitary. We are goint to construct in Theorem~\ref{thm:bij-tree-pair} a $k$-to-$1$ mapping $\phi$ between $P_{n,k,m}$ and $\cT_{n,k,m}$ for generic values of $n,k,m$, i.e., for $1\le m\le k$ and $k+m\le n$. Note that when $m=0$ and $k=n$, $\cT_{n,n,0}$ contains only one plane tree, namely the $n$-chain, and $w_{n,n,0}=1$ as well. For other values of $n,k,m$, $\cT_{n,k,m}$ is empty and $w_{n,k,m}=0$. So theorem~\ref{thm:tree-2} indeed follows from the next lemma and theorem~\ref{thm:bij-tree-pair}.
\begin{lemma}\label{lem:pair enum}
For $1 \leq m \leq k$ and $k+m\le n$, we have
\begin{align}\label{pt-p}
\abs{P_{n,k,m}} &= \binom{n}{k-1} \binom{n-k-1}{m-1}\binom{k}{m}.
\end{align}
\end{lemma}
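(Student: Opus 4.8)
The plan is to exploit the fact that a pair $(A,c)\in P_{n,k,m}$ consists of two ingredients whose defining conditions are entirely independent of one another: the subset $A$ is constrained only to lie in $\binom{[n]}{k-1}$, while the composition $c$ is constrained only to lie in $\Comp_{n,k,m}$. There is no interaction between the choice of $A$ and the choice of $c$, so $P_{n,k,m}$ is literally the Cartesian product $\binom{[n]}{k-1}\times\Comp_{n,k,m}$. By the product principle, its cardinality factors as
\begin{align*}
\abs{P_{n,k,m}}=\binom{[n]}{k-1}\text{-count}\ \cdot\ \abs{\Comp_{n,k,m}}.
\end{align*}
The first factor is immediate: the number of $(k-1)$-element subsets of an $n$-element set is $\binom{n}{k-1}$.

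For the second factor I would invoke Proposition~\ref{prop:comp}, which gives $\abs{\Comp_{n,k,m}}=\binom{n-k-1}{m-1}\binom{k}{m}$. Before applying it I should verify that the standing hypotheses of the lemma put us in the admissible range of that proposition, namely $1\le m\le k<n$. This is a short check: from $m\ge 1$ and $k+m\le n$ we get $k\le n-m\le n-1<n$, so indeed $k<n$ and the proposition applies verbatim. Multiplying the two factors then yields exactly the claimed formula $\binom{n}{k-1}\binom{n-k-1}{m-1}\binom{k}{m}$.

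There is no genuine obstacle here; the only thing worth flagging is that the result is a direct corollary of Proposition~\ref{prop:comp} together with the independence of the two coordinates. The entire content of the lemma is the observation that $P_{n,k,m}$ decouples as a product, after which the enumeration of compositions already carried out earlier does all the work.
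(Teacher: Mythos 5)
Your proposal is correct and matches the paper's proof, which likewise just cites the product rule together with Proposition~\ref{prop:comp} (Eq.~\eqref{id:Compnkm}). Your verification that $k<n$ holds under the lemma's hypotheses is a small, worthwhile addition the paper leaves implicit.
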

\begin{proof}
This is a direct consequence of the product rule and Eq.~\eqref{id:Compnkm}.
\end{proof}

Now, before we construct the aforementioned $k$-to-$1$ mapping $\phi$, let us introduce a convenient way of representing a given pair $(A,c)\in P_{n,k,m}$, where $A=\set{a_1,\ldots,a_{k-1}}\in[n]$ and $c=(c_1,c_2,\ldots,c_k)$. We start by listing out $1,2,\ldots,n$, with a bar inserted between $\sum_{1\le i\le j}c_i$ and $1+\sum_{1\le i\le j}c_i$, for each $1\le j\le k-1$. Then we underline those numbers that occur in $A$, i.e., $a_1,a_2,\ldots,a_{k-1}$. For instance, the pair $(\set{3,4,6},(2,1,2,1))$ is expressed as 
$$12\mid \underline{3}\mid \underline{4} 5\mid \underline{6}.$$
We call such an expression an {\it underlined composition}. The set of all underlined compositions where the associated composition belongs to $\Comp_{n,k,m}$ is denoted as $\UComp_{n,k,m}$. It should be clear how to uniquely recover a pair $(A,c)$ from an underlined composition, hence we see that $\abs{\UComp_{n,k,m}}=\abs{P_{n,k,m}}$. From now on we shall speak of the pair $(A,c)$ and its corresponding underlined composition interchangeably.

\begin{theorem}\label{thm:bij-tree-pair}
For $1 \leq m \leq k$ and $k+m\le n$, there exists a $k$-to-$1$ mapping $$\phi: \UComp_{n,k,m} \to \cT_{n,k,m}.$$
Consequently, $\abs{\cT_{n,k,m}}=\frac{1}{k}\abs{\UComp_{n,k,m}}=\frac{1}{k}\abs{P_{n,k,m}}=w_{n,k,m}$.
\end{theorem}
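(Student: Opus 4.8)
The plan is to realize $\phi$ through the breadth-first encoding of a plane tree together with the cycle lemma, which is exactly what produces the factor of $k$. First I would fix the decoding of a tree: label the $n+1$ vertices of a plane tree $0,1,\dots,n$ in breadth-first order (the root being $0$), and record the word $N=(N_0,N_1,\dots,N_n)$, where $N_i$ is the number of children of vertex $i$. Since children are enqueued block-by-block in order of their parents, $N$ is a word of $n+1$ nonnegative integers with $\sum_i N_i=n$, and it is the breadth-first word of a genuine tree precisely when it satisfies the ballot condition $\sum_{i=0}^{j}(N_i-1)\ge 0$ for all $0\le j\le n-1$ (the total being $\sum_{i=0}^n(N_i-1)=-1$). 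The $k$ internal nodes are exactly the positions with $N_i\ge 1$; reading their values in order yields a composition in $\Comp_{n,k,m}$ (the $m$ non-unitary internal nodes being the $m$ parts exceeding $1$), while the positions of the $k-1$ non-root internal nodes form a set in $\binom{[n]}{k-1}$. Thus a tree determines a valid (ballot-satisfying) underlined composition, and conversely such a valid element decodes uniquely to a tree; this is a bijection between $\cT_{n,k,m}$ and the valid elements of $\UComp_{n,k,m}$.

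Second, I would define $\phi$ on all of $\UComp_{n,k,m}$, not only the valid elements. Given $(A,c)$, build $N$ as above: place $c_1$ at position $0$, place $c_2,\dots,c_k$ at the positions of $A$ taken in increasing order, and $0$ elsewhere. By the cycle lemma, among the cyclic shifts of $N$ exactly one satisfies the ballot condition; I set $\phi(A,c)$ to be the tree decoded from that unique valid shift. Because the valid shift begins with a positive entry, it starts at an internal node, so it is one of the $k$ shifts of $N$ that begin at an internal node.

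Third, for the fiber count I would show that each tree $T$ has exactly $k$ preimages, namely the $k$ readings of its breadth-first word $W$ obtained by cyclically rotating $W$ to begin at each of its $k$ internal nodes. Each such rotation starts with a positive entry, hence reads off as a pair in $\UComp_{n,k,m}$ whose constructed word is precisely that rotation of $W$; its unique valid cyclic shift is $W$ itself, so it maps to $T$. Conversely any preimage of $T$ has constructed word cyclically equivalent to $W$ and starting at an internal node, hence equals one of these $k$ rotations. These $k$ rotations are pairwise distinct because $W$ is aperiodic: a period $p\mid (n+1)$ would force $(n+1)/p$ to divide $\sum_{i}(N_i-1)=-1$, whence $p=n+1$. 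This yields $\abs{\cT_{n,k,m}}=\frac1k\abs{\UComp_{n,k,m}}=\frac1k\abs{P_{n,k,m}}=w_{n,k,m}$.

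The main obstacle I anticipate is the precise invocation of the cycle lemma for the ballot word and the verification that its unique valid shift necessarily starts at an internal node, so that it corresponds to a legitimate pair with $k-1$ underlined positions and a $k$-part composition in $\Comp_{n,k,m}$, together with the bookkeeping that the $m$ non-unitary internal nodes correspond exactly to the $m$ parts of $c$ exceeding $1$ and are preserved under cyclic rotation. The aperiodicity argument securing orbit size exactly $k$ is the other point needing care, though it follows cleanly from the total drop being $-1$.
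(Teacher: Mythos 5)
Your proposal is correct, and it reaches the theorem by a genuinely different route than the paper. The paper constructs $\phi$ by direct tree surgery: each part $c_i$ becomes a claw, and $k-1$ ``amalgamations'' are performed at the underlined labels, with components treated as cyclically ordered so that a next component always exists; the $k$-to-$1$ property is then argued by exhibiting, for each cyclic shift $c'$ of $c$, a unique companion set $A'$ with $\phi(A,c)=\phi(A',c')$, and the $k$ preimages are seen to be pairwise distinct via the coprimality of $k$ and $k-1$ (needed when $c'=c$ as compositions). You instead encode a pair $(A,c)\in\UComp_{n,k,m}$ as a length-$(n+1)$ breadth-first degree word (a {\L}ukasiewicz-type word with entries $\ge 0$ summing to $n$, equivalently increments $\ge -1$ with total drop $-1$), invoke the Dvoretzky--Motzkin cycle lemma to select the unique ballot-satisfying rotation, decode that rotation as a plane tree, and count fibers as the $k$ rotations of the tree's BFS word beginning at internal nodes, with pairwise distinctness falling out of aperiodicity (a period $p<n+1$ would force $(n+1)/p$ to divide the total drop $-1$). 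Your checks are the right ones and they go through: the ballot condition $\sum_{i=0}^{j}(N_i-1)\ge 0$ does characterize BFS degree words exactly as it does preorder words, the valid rotation must start at a positive entry (else its first partial sum is $-1$), and rotation preserves both the number $k$ of positive entries and the number $m$ of entries exceeding $1$. The comparison is instructive: your argument makes the fiber-size claim rigorous essentially for free and reuses the cycle lemma already quoted in Section 2, giving a more uniform treatment; the paper's amalgamation picture is more explicit about \emph{which} $k$ underlined compositions form a fiber (the cyclic shifts of $(A,c)$), which is the viewpoint later exploited for the group action on cyclic compositions. Note also that the two constructions are close kin rather than strangers: the paper's inverse map is precisely BFS labelling followed by cutting at non-root internal nodes, so your proof can be read as a word-level formalization of the same correspondence, with the cycle lemma replacing the paper's ad hoc coprimality argument.
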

\begin{proof}
Given an underlined composition $(A,c)$, we aim to construct a plane tree, say $T$, that will be the image of $(A,c)$ under $\phi$. We view each part $c_i$ as a ``claw'', i.e., a subtree with one root vertex attached by exactly $c_i$ children. Suppose $m=\sum_{1\le t\le i-1}c_t$ ($m=0$ if $i=1$), then these $c_i$ children are labeled as $m+1,m+2,\ldots,m+c_i$ from left to right, with those labels belonging to $A$ underlined. Now for each underlined label, we perform one ``amalgamation'', adjoining two components together, so that the initial $k$ claws that correspond to $c_1,c_2,\ldots,c_k$ eventually become a single tree after $k-1$ times of amalgamations. More precisely, suppose $x$ is the smallest underlined label in a certain component $a$, we find the ``next'' available component $b$, which could be simply a claw, or could be some tree produced from several previously conducted amalgamations. Attach 
$b$ to $a$ so that $x$ becomes the label of $b$'s root and we no longer view $x$ as underlined from now on. We view this process as one time amalgamation at label $x$. The reader is advised to use the concrete example in Fig.~\ref{map-phi} to go over the entire construction of $T$.

One crucial point to be made is that during this construction, we are treating these components essentially as cyclically listed, so there always exists the ``next'' component. For instance, if we perform the amalgamation at label $8$ in the underlined composition $\underline{1}\mid 23\underline{4}\mid 56\mid 7\underline{8}9$, the next component after $7\underline{8}9$ is understood to be $\underline{1}$, so that this $1$-claw $\underline{1}$ is attached to the middle child of the $3$-claw $7\underline{8}9$ for this amalgamation. This observation, on the other hand, explains the fact that $\phi$ is indeed a $k$-to-$1$ mapping, since for each cyclic shift of $c$, say $c'$, we can uniquely find another $(k-1)$-subset, say $A'$, such that after $k-1$ times of amalgamations, the two pairs $(A,c)$ and $(A',c')$ output the same plane tree (albeit with different labels). In other words, $\phi(A,c)=\phi(A',c')$; see Fig.~\ref{map-phi} for all four preimages of $T$ under the map $\phi$. Note that for general values of $n,k,m$, we might have $c'=c$ for a certain cyclic shift $c'$, but the underlined sets $A$ and $A'$ should still be distinct since $k$ and $k-1$ are coprime with each other. So indeed $(A,c)\neq(A',c')$. 

Conversely, given a plane tree $T\in\cT_{n,k,m}$, we label all of its non-root vertices $1,2,\ldots,n$ in a breadth-first fashion. I.e., label the first level nodes from left to right, then the second level nodes from left to right, so on and so forth. Then we underline and cut each non-root internal node, so that the original tree $T$ decomposes into $k$ claws. This should recover one out of $k$ preimages of $T$ under $\phi$. For the tree $T$ in Fig.~\ref{map-phi}, we recover the underlined composition $1\underline{2}3\mid \underline{4}\mid 56\underline{7}\mid 89$ in this way.
\end{proof}

The following is an example for the mapping $\phi$ in the case of $(n,k,m)=(9,4,3)$.
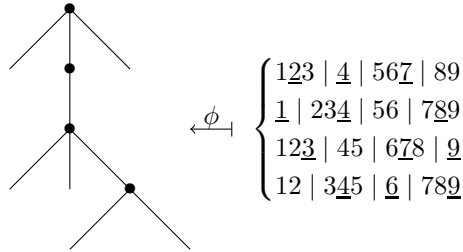
\begin{figure}[ht]
	\begin{tikzpicture}[scale=0.4]
		\centering
		\draw[-] (28,20.6) to (26,18.6);
		\draw[-] (28,20.6) to (28,18.6);
		\draw[-] (28,20.6) to (30,18.6);
		\draw[-] (28,18.6) to (28,16.6);
		\draw[-] (28,16.6) to (26,14.6);
		\draw[-] (28,16.6) to (28,14.6);
		\draw[-] (28,16.6) to (30,14.6);
		\draw[-] (30,14.6) to (28,12.6);
		\draw[-] (30,14.6) to (32,12.6);
		\node at (28,20.6){$\bullet$};
		\node at (28,18.6){$\bullet$};
		\node at (28,16.6){$\bullet$};
		\node at (30,14.6){$\bullet$};
		\draw [<-|] (32,16.6)--(33.4,16.6);
		\node at (32.7,17){$\phi$};
		\node at (38,16.6) {$\begin{cases}
		1\underline{2}3\mid \underline{4}\mid 56\underline{7}\mid 89\\
		\underline{1}\mid 23\underline{4}\mid 56\mid 7\underline{8}9\\
		12\underline{3}\mid 45\mid 6\underline{7}8\mid \underline{9} \\
		12\mid 3\underline{4}5\mid \underline{6}\mid 78\underline{9}
		\end{cases}$};
		
		
		
		
		
	
	\end{tikzpicture}
	\caption{A plane tree $T$ and all four of its preimages under $\phi$}
	\label{map-phi}
\end{figure}

We proceed to present our second proof of theorem~\ref{thm:tree-2}. Let us first introduce four tree-related statistics. Given any plane tree $T$, we denote
\begin{align*}
\leaf(T) &:=\abs{\set{\text{the leaves of $T$}}},\\
\gleaf(T) &:=\abs{\set{\text{the good leaves of $T$}}},\\
\int(T) &:=\abs{\set{\text{the internal nodes of $T$}}}, \text{ and}\\
\pint(T) &:=\abs{\set{\text{the non-unitary internal nodes of $T$}}}.  
\end{align*}
Here the ``$p$'' in $\pint$ stands for ``prolific''.

After viewing the duality between the two interpretations of $w_{n,k,m}$ in terms of plane trees, one wonders if there exists a direct bijection defined on $\cT_n$, the set of plane trees with $n$ edges, such that the pair of tree statistics $(\leaf,\gleaf)$ corresponds to $(\int,\pint)$. Such a map would immediately imply theorem~\ref{thm:tree-2} in view of theorem~\ref{thm:tree-1}. As it turns out\footnote{We thank Zhicong Lin for bringing reference~\cite{LLZ2024} to our attention.}, an involution constructed in \cite{LLZ2024} gives rise to the following strengthening of this equidistribution. To make the current paper self-contained, we sketch a proof here. The reader is advised to check \cite[Thm.~2.2]{LLZ2024} for further details.

\begin{theorem}\label{thm:quadruple}
There exists an involution $\widetilde{\zeta}: \cT_n\to\cT_n$ such that for every $T\in\cT_n$,
\begin{align*}
(\leaf,\gleaf,\int,\pint)\:T &= (\int,\pint,\leaf,\gleaf)\:\widetilde{\zeta}(T).
\end{align*} 
\end{theorem}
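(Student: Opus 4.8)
The plan is to construct the involution directly and then read off Theorem~\ref{thm:tree-2} by combining it with Theorem~\ref{thm:tree-1}: if $\widetilde{\zeta}$ sends a tree with $k$ leaves and $m$ good leaves to one with $k$ internal nodes and $m$ non-unitary internal nodes, the two enumerations of $w_{n,k,m}$ coincide. Before building $\widetilde{\zeta}$ I would translate the four statistics through the contour bijection $\theta$ to see what target I am aiming at. A short contour analysis shows that $\leaf$ counts the peaks (the $UD$-factors) of $\theta(T)$ and $\gleaf$ counts its $UUD$-factors, while the valleys of $\theta(T)$ number $n-\int(T)$, so that $\int(T)=n+1-(\#\text{peaks})$. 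Thus the coarse part of the desired identity, $\leaf\leftrightarrow\int$, is exactly the Narayana peak-symmetry $k\leftrightarrow n+1-k$. I would stress here that this symmetry is \emph{not} induced by any reversal or complement of the $UD$-word (reverse-complement fixes each $UD$-factor and hence preserves peaks), which is precisely why a genuine recursive construction, rather than a one-line word symmetry, is required.

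For the construction itself I would recall the reflection-like involution of Li--Lin--Zhao and set it up recursively through the root-subtree (first-return) decomposition $T\leftrightarrow(T_1,\dots,T_d)$, where $T_1,\dots,T_d$ are the subtrees hanging from the root. Under this decomposition the statistics obey
$$\leaf(T)=\sum_i\leaf(T_i),\qquad \int(T)=1+\sum_i\int(T_i),\qquad \pint(T)=[d\ge 2]+\sum_i\pint(T_i),$$
while $\gleaf(T)=\sum_i\bigl(\gleaf(T_i)+\epsilon(T_i)\bigr)$, where $\epsilon(S)=1$ exactly when the leftmost child of the root of $S$ is a leaf (this $\epsilon$-correction accounts for the fact that inside $T$ the roots of the $T_i$ are non-root vertices, so their leftmost-child-leaves now count as good leaves). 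The involution is then defined so as to interchange the roles of these two recursions: its core local move converts a maximal chain of unitary internal nodes into a fan of sibling leaves and back, which trades ``vertical'' internal structure for ``horizontal'' leaf structure and realises $\leaf\leftrightarrow\int$. I would pin this move down on the relevant local configuration, then extend it over the whole tree through the decomposition above.

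The verification then splits into three inductive checks: that $\widetilde{\zeta}(T)$ is again a legal plane tree with $n$ edges; that $\widetilde{\zeta}\circ\widetilde{\zeta}=\mathrm{id}$; and that $(\leaf,\gleaf,\int,\pint)\,T=(\int,\pint,\leaf,\gleaf)\,\widetilde{\zeta}(T)$. Each I would prove by induction on the number of edges, feeding the statistic recursions above into the recursive definition of $\widetilde{\zeta}$ and cross-checking against the Narayana recurrence \eqref{rec:Narapoly}, whose term $tN_{n-1}(t)$ marks exactly the case $T_1=\varepsilon$, i.e.\ a leaf produced or destroyed by the core move.

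The hard part will be the refinement, namely carrying $\gleaf\leftrightarrow\pint$ \emph{simultaneously} with the coarse swap $\leaf\leftrightarrow\int$. The good-leaf count is delicate because it depends on the leftmost-child structure together with the non-root condition encoded in the $\epsilon$-correction, whereas the prolific count depends only on the branching indicator $[d\ge 2]$; I must check that the single core move transfers both refinements at once and that this choice keeps the whole map an honest involution. My strategy is to verify the transfer of $(\gleaf,\pint)$ on the local configuration touched by the core move (where the prototype chain$\leftrightarrow$fan already exhibits $(\gleaf,\pint)=(1,0)\leftrightarrow(0,1)$), confirm compatibility with the involutivity constraint, and then propagate through the recursion, deferring the detailed bookkeeping to \cite[Thm.~2.2]{LLZ2024}.
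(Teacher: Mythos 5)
Your setup is on target in several respects: the dictionary through $\theta$ is correct ($\leaf\leftrightarrow$ peaks, $\gleaf\leftrightarrow UUD$-factors, $\int(T)=n+1-\#\text{peaks}$), the chain$\leftrightarrow$fan prototype is exactly the right local picture, and the map you are reaching for is indeed the Li--Lin--Zhao involution that the paper itself invokes. The genuine gap is that your involution is never actually defined, and the scaffolding you propose for defining it --- extend a core local move through the first-return decomposition $T\leftrightarrow(T_1,\dots,T_d)$ and verify the statistics by induction via your displayed recursions --- cannot be made to work for \emph{any} map satisfying the theorem. If $\widetilde{\zeta}$ carried each $T_i$ to a root subtree of the image, your own recursions would force $\int(\widetilde{\zeta}(T))=1+\sum_i\int(\widetilde{\zeta}(T_i))=1+\sum_i\leaf(T_i)=1+\leaf(T)$, off by one from the required $\int(\widetilde{\zeta}(T))=\leaf(T)$. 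Your prototype already exhibits the failure: the image of the $n$-fan, whose root subtrees are $n$ single vertices, must be the $n$-chain, which is not assembled from $n$ single-vertex images in any order. The correct involution necessarily restructures \emph{across} levels --- it exchanges the sequence of children of the root with the leftmost branch of the tree, recursively --- so the first-return decomposition is simply not compatible with it, and the three inductive checks you outline would stall at the first step. Deferring the $(\gleaf,\pint)$ bookkeeping to \cite{LLZ2024} is legitimate only once the map exists; as written, your proposal has a plan where the definition should be. (The cross-check against the recurrence \eqref{rec:Narapoly} is also beside the point: it confirms numerology, not well-definedness or involutivity.)

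For comparison, the paper avoids recursion entirely: it sets $\widetilde{\zeta}:=\xi^{-1}\circ\zeta\circ\xi$, where $\xi:\cT_n\to\cB_n$ is the classical plane-tree-to-binary-tree bijection (leftmost child becomes left child, closest younger sibling becomes right child) and $\zeta$ is mirror symmetry on binary trees, so that involutivity is automatic; the statistic transfer is then verified through the node correspondence $v\curly u$ of \cite{LLZ2024} (an internal node maps to its youngest child; a type I leaf maps to the root; a type II leaf maps to the closest elder sibling of the first ancestor having elder siblings), with a short case analysis showing good leaves pair with non-unitary internal nodes. If you insist on a recursive formulation, the recursion must be on forests through the binary encoding: mirroring swaps the children-forest of the first tree with the remaining forest, i.e.\ $\widetilde{\zeta}$ obeys a twisted, level-mixing recursion, not the componentwise one you wrote; alternatively, adopt the paper's conjugation definition and your remaining task reduces to the same case analysis of $v\curly u$ that the paper carries out.
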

\begin{proof}
Let $\cB_n$ be the set of all binary trees\footnote{I.e., plane trees where each internal node has either a left child, or a right child, or both.} with $n$ nodes. There is a natural bijection $\xi:\cT_n\to\cB_n$ that we are going to recall, and let $\zeta:\cB_n\to\cB_n$ be the map of mirror symmetry. Then $\widetilde{\zeta}$ is taken to be
$$\widetilde{\zeta}:=\xi^{-1}\circ \zeta \circ \xi,$$
which is clearly seen to be an involution defined over $\cT_n$. In a plane tree, nodes with the same parent are called {\it siblings} and the siblings to the left
(resp. right) of a node $v$ are called elder (resp. younger) siblings of $v$. For a plane tree $T\in \mathcal{T}_n$ , we define the binary tree $\xi(T)\in \mathcal{B}_n$  by requiring that for each pair of non-root nodes $(x,y)$ in $T$:
\begin{itemize}
\item [(a)]$y$ is the left child of $x$ in $\xi(T)$ only if when $y$ is the leftmost child of $x$ in $T$;
\item [(b)]$y$ is the right child of $x$ in $\xi(T)$ only if when $x$ is the closest elder sibling of $y$ in $T$.
\end{itemize}
It remains to show the quadruple of statistics $(\leaf,\gleaf,\int,\pint)$ is indeed transformed as claimed by $\widetilde{\zeta}$. Actually, there exists a one-to-one correspondence among nodes from a given plane tree that is denoted as $v\curly u$ in \cite{LLZ2024}. More precisely, for any node $v$ of a plane tree $T$, we can uniquely determine the node $u$ according to the following three cases.
\begin{itemize}
	\item If $v$ is an internal node, then $u$ is the youngest child of $v$.
	\item If $v$ is a leaf and no nodes in the path from $v$ to the root has elder siblings, then $u$ is the root $0$. We call $v$ a type I leave in this case.
	\item If $v$ is a leaf and $w$ is the first node that has elder sibling(s) in the path from $v$ to the root, then $u$ is the closest elder sibling of $w$. We call $v$ a type II leave in this case.
\end{itemize} 

Now it is routine to check the types of nodes under this correspondence. Namely, if $v$ is a type I leaf in $T$, then $u$, being the root $0$ in $T$, corresponds to the root $0$ in $\widetilde{\zeta}(T)$, which is an internal node. In particular, if $v$ is a good leaf of type I, then its parent cannot be $0$, which implies that the root of $\widetilde{\zeta}(T)$ has more than one child, hence it contributes to $\pint(\widetilde{\zeta}(T))$ as desired. If $v$ is a type II leaf in $T$, then $u$ is the parent of $v$ in $\widetilde{\zeta}(T)$, thus an internal node. And if in addition $v$ is a good leaf in $T$, then itself cannot have any elder siblings, which forces $u$ to have more than one child in $\widetilde{\zeta}(T)$. Finally, when $v$ is an internal node in $T$, then $u$ is its youngest child in $T$, meaning that $u$ has no right child in $\xi(T)$, thus no left child in $\zeta(\xi(T))$, making $u$ a leaf in $\widetilde{\zeta}(T)$. In particular, if $v$ is non-unitary in $T$, then $u$ has a closest elder sibling, say $w$, in $T$. This means $w$ has $u$ as its right child in $\xi(T)$, then $w$ has $u$ as its left child in $\zeta(\xi(T))$, making $u$ the leftmost child of $w$ in $\widetilde{\zeta}(T)$, i.e. a good leaf. So we see indeed, for the connected pair $v\curly u$, $v$ contributes to $\leaf(T)$ (resp.~$\gleaf(T)$, $\int(T)$, $\pint(T)$) if and only if $u$ contributes to $\int(\widetilde{\zeta}(T))$ (resp.~$\pint(\widetilde{\zeta}(T))$, $\leaf(\widetilde{\zeta}(T))$, $\gleaf(\widetilde{\zeta}(T))$).

\end{proof}

\section{A group action for cyclic compositions}\label{sec:gamma}

A polynomial $f(x)=\sum_{i=0}^n a_i x^i$ is said to be {\it symmetric} if $a_i=a_{n-i}$ holds for every $0\le i\le n$. For the vector space consisted of all symmetric polynomials in $\bC[x]$ with degree no greater than $n$, one of its basis is easily seen to be given by $\set{x^k(1+x)^{n-2k}}_{0\le k\le \lfloor n/2 \rfloor}$. A notion stronger than symmetry stems from this consideration, namely the $\gamma$-positivity. A polynomial $f(x)=\sum_{i=0}^n a_i x^i$ is said to be {\it $\gamma$-positive} if it has an expansion
$$f(x)=\sum_{k=0}^{\lfloor\frac{n}{2}\rfloor}\gamma_kx^k(1+x)^{n-2k}$$ 
with $\gamma_k\ge 0$. The (shifted) Narayana polynomial $N_n(t)/t=\sum_{0\le m\le n-1}N_{n,m+1}t^m$ is known to be symmetric and $\gamma$-positive; see for example \cite[sect.~4.3]{Pet2015} and \cite{Ath2018}. 

Let $W_{n,k}(t):=\sum_{m=1}^k w_{n,k,m}t^m$. Now, thanks to the relations \eqref{id:w-Narayana} and \eqref{id:w-Narayana2}, one gets for free the symmetry and $\gamma$-positivity of the polynomials $W_{2k+1,k}(t)$ and $W_{2k-1,k}(t)$ from those of the Narayana polynomial. B\'ona et al. \cite[Remark 6.10]{BDL2022} raised a natural question of giving an alternative proof for the $\gamma$-positivities of $W_{2k+1,k}(t)$ and $W_{2k-1,k}(t)$. We aim to supply such a proof in this section, via a ``valley-hopping'' kind of group action defined over cyclic compositions. 

For the remainder of this section, we focus on the case of $n=2k+1$. Hence $\gcd(2k+1,k)=1$ and in particular, each equivalence class of cyclic compositions contains a unique representative, namely the dominant composition. All of our constructions from now on are actually done over $\DComp_k$, the set of dominant compositions of $2k+1$ into $k$ parts.


\begin{definition}
For $c=\left(c_1, c_2, \ldots, c_n\right) \in \DComp_n$, set $c_{n+1}:=1$. Then each part $c_i$, $1\le i\le n$ can be classified to be in one of the following four cases:
\begin{itemize}
	\item a non-unitary one if $c_i >1$ and $c_{i+1}=1$;
	\item a one non-unitary if $c_i =1$ and $c_{i+1}>1$;
	\item a double non-unitary if $c_i >1$ and $c_{i+1}>1$;
	\item a double one if $c_i =1$ and $c_{i+1}=1$.
\end{itemize}
\end{definition}
We denote $\Nuo(c)$, $\Onu(c)$, $\Dnu(c)$ and $\Do(c)$
respectively the set of parts $c_i$ in $c$ that belong to the above four cases, and let $\nuo(c)$ (resp. $\onu(c)$, $\dnu(c)$, $\sdo(c)$) be their corresponding cardinalities. Unless otherwise stated, the default value to be appended after each composition is $1$. We begin with a lemma that justifies the subsequent definition \ref{def:anchor}.

\renewcommand\theenumi{\roman{enumi}}
\renewcommand\labelenumi{(\theenumi)}
\begin{lemma}\label{lemma-ap}
Given a composition $c=\left(c_1, c_2, \ldots, c_n\right) \in \DComp_n$, we have for any $1\le i\le n$,
\begin{enumerate}
	\item if $c_i\in \Dnu(c)$, there exists a certain $j>i$ such that $f(c;i+1,j)=0$ and $c_{j+1}=1$;
	\item if $c_i\in \Do(c)$, there exists a certain $j<i$ such that  $f(c;j,i-1)>0$. 
\end{enumerate}
\end{lemma}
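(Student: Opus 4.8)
The plan is to reformulate both parts in terms of the partial-sum sequence already shown to be L-smooth. Write $g(t):=f(c;1,t)$ for $0\le t\le n$, with the convention $g(0)=0$; then $f(c;i+1,j)=g(j)-g(i)$ and $f(c;j,i-1)=g(i-1)-g(j-1)$, so each part becomes a statement about where the sequence $g$ revisits or drops below a prescribed value. Three facts drive the argument: first, dominance (Proposition~\ref{char:k-domi}) gives $g(t)\ge 1$ for all $1\le t\le n$; second, since $c$ sums to $2n+1$ we have $g(n)=(2n+1)-2n=1$; third, by Proposition~\ref{L-smooth} the sequence $g(0),\ldots,g(n)$ is L-smooth and satisfies $g(t)=g(t-1)-1$ precisely when $c_t=1$.

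For part (i), I assume $c_i\in\Dnu(c)$, so $c_i>1$ and $c_{i+1}>1$; note that $c_{i+1}>1$ forces $i<n$, since otherwise $c_{n+1}=1$ by convention. The target condition ``$f(c;i+1,j)=0$ and $c_{j+1}=1$'' translates to: find $j>i$ with $g(j)=g(i)$ and $g(j+1)=g(j)-1$, where the fictitious drop after index $n$ encodes the convention $c_{n+1}=1$. I would split on the value $g(i)$. If $g(i)=1=g(n)$, then $j=n$ works at once, as $g(n)-g(i)=0$ and $c_{n+1}=1$ by convention. If $g(i)>1$, then $g(i)>g(n)$ with $i<n$, so Proposition~\ref{down-by-one} applied to the L-smooth sequence $g$ on the range $[i,n]$ yields an index $k$, $i\le k<n$, with $g(k)=g(i)$ and $g(k+1)=g(k)-1$; taking $j=k$ gives $f(c;i+1,j)=0$ and, by the third fact, $c_{j+1}=1$. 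The one subtlety is to verify $j>i$ strictly: this is exactly where $c_{i+1}>1$ enters, since $c_{i+1}>1$ gives $g(i+1)\ge g(i)>g(i)-1$, so the descending step cannot occur at $k=i$, forcing $k>i$.

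Part (ii) is essentially immediate from dominance. If $c_i\in\Do(c)$ then $c_i=1$, whence $g(i)=g(i-1)-1$; combined with $g(i)\ge 1$ this gives $g(i-1)\ge 2$, and in particular $i\ge 2$ (the case $i=1$ is excluded because $c_1=1$ would force $g(1)=-1$, contradicting dominance). I would then simply take $j=1$: indeed $f(c;1,i-1)=g(i-1)-g(0)=g(i-1)>0$ by dominance, which is the desired inequality.

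The only genuinely delicate step is part (i). One must translate the two-sided condition ``$f(c;i+1,j)=0$ together with $c_{j+1}=1$'' into the language of $g$ as a return to level $g(i)$ immediately followed by a unit descent, then handle the boundary case $g(i)=1$ separately — here Proposition~\ref{down-by-one} does not apply, as its hypothesis requires the strict inequality $g(i)>g(n)$, so one falls back on the convention $c_{n+1}=1$ with $j=n$ — and finally use $c_{i+1}>1$ to rule out the degenerate return at the starting index. Part (ii) presents no real obstacle, since $j=1$ reduces the claim directly to dominance.
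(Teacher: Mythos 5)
Your proof is correct and follows essentially the same route as the paper: part (i) via the L-smoothness of the partial sums (Proposition~\ref{L-smooth}) combined with Proposition~\ref{down-by-one}, using $c_{i+1}>1$ to rule out $j=i$, and part (ii) by taking $j=1$ and invoking dominance (Proposition~\ref{char:k-domi}). The only cosmetic difference is that the paper appends the extra term $f(c;1,n+1)=0$ (encoding the convention $c_{n+1}=1$) to the partial-sum sequence and applies Proposition~\ref{down-by-one} once on $[i,n+1]$, which absorbs your separately handled boundary case $g(i)=1$.
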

\begin{proof}
\begin{enumerate}
	\item First we note that by proposition~\ref{L-smooth}, the sequence $$f(c;1,1),\ldots,f(c;1,n)=1,f(c;1,n+1)=0$$ is L-smooth, and we see $f(c;1,i)\ge 1>f(c;1,n+1)$, so applying proposition~\ref{down-by-one} to this sequence we can find a certain index $j$, $i\le j<n+1$, such that $f(c;1,i)=f(c;1,j)$ (or equivalently $f(c;i+1,j)=0$) and $f(c;1,j+1)=f(c;1,j)-1$ (or equivalently $c_{j+1}=1$). Moreover, for the current case $c_i\in\Dnu(c)$ so $c_{i+1}>1$, rejecting the possibility of $j=i$.\\
  \item We have $c_i=1$ so $i>1$. Applying the characterization of dominating composition given by proposition~\ref{char:k-domi} we see that it suffices to take $j=1$.
\end{enumerate}
\end{proof}
\begin{definition}\label{def:anchor}
Let $c$ be a dominant composition. For each $c_i\in \Dnu(c)\cup \Do(c)$, we define its unique anchor point, denoted as $\ap(c_i)$, as follows:
\begin{enumerate}
	\item If $c_i\in \Dnu(c)$, find $c_j$ with the smallest $j > i$, such that $f(c;i + 1,j) = 0$ and $c_{j+1 } = 1$. Set $\ap(c_i):= c_j$. 
	\item If $c_i\in \Do(c),$ find $c_j$ with the largest $j < i$, such that $f (c,j,i-1) > 0$. Set $\ap(c_i):= c_j$. 
\end{enumerate}
\end{definition}
The classical Foata-Strehl action \cite[Chap.~4.1]{Ath2018} (a.k.a. the valley hopping) permutes the entries of a given permutation while keeping the values of all the entries. The action on dominant compositions that we are going to describe involves not only deletion and insertion of parts to change their positions, but also splitting and combining that will change their values. Consequently, to properly define our action $\psi_i$, we feel the need to associate with each composition $c\in\DComp_n$ a label sequence $l$, which is initially (i.e., before the action) taken to be $l=(1,2,\ldots,n)$ and remains a permutation of $[n]$. This label sequence is what the subindex $i$ in $\psi_i$ refers to. We use the following two-line array notation for such a pair:
\begin{align*}
\langle c|l \rangle=
\begin{pmatrix}
c_1 & c_2 & \cdots & c_n \\
l_1 & l_2 & \cdots & l_n
\end{pmatrix}.
\end{align*}

\begin{definition}\label{def-psi}
For any pair $\langle c|l\rangle$ with $c\in\DComp_n$ and $l$ being its label sequence, we define for each label $l_i\in[n]$ a map $\psi_{l_i}$ according to the following three cases, setting $\langle c'|l'\rangle:=\psi_{l_i}(\langle c|l\rangle)$:
\begin{enumerate}
	\item if $c_i\in \Dnu(c)$ with $\ap(c_i)=c_j$, then $\langle c'|l'\rangle$ satisfies 
	\begin{align*}
		\begin{cases}
			c_t^{\prime}=c_t+c_{t+1}-1  &  \text{ if  $t=i$,} \\
			c_t^{\prime}=c_{t+1}& \text{ if $i+1\le t\le j-1$,}  \\
			c_t^{\prime}=1& \text{ if $t=j$, } \\
			c_t^{\prime}=c_t& \text{ otherwise,} 
		\end{cases}
		\text{ and } 
	\begin{cases}
		l_t^{\prime}=l_{t+1}  &  \text{ if $i\le t\le j-1$,}  \\
		l_t^{\prime}=l_{i}& \text{ if $t=j$,}  \\
		l_t^{\prime}=l_t& \text{ otherwise.} 
	\end{cases}
	\end{align*}
	\item if $c_i\in \Do(c)$ with $\ap(c_i)=c_j$ and $\alpha:=f(c;j,i-1)$, then $\langle c'|l'\rangle$ satisfies
	\begin{align*}
		\begin{cases}
			c_t^{\prime}=\alpha+1  &  \text{ if  $t=j$, }\\
			c_t^{\prime}=c_j-\alpha& \text{ if $t=j+1$,}  \\
			c_t^{\prime}=c_{t-1}& \text{ if $j+2\le t\le i$,}  \\
			c_t^{\prime}=c_t& \text{ otherwise,}
		\end{cases}
	\text{ and }
	\begin{cases}
			l_t^{\prime}=l_i  &  \text{ if $t=j$, } \\
			l_t^{\prime}=l_{t-1}& \text{ if $j+1\le t\le i$,}  \\
			l_t^{\prime}=l_t& \text{ otherwise. }
	\end{cases}
	\end{align*}	
	\item if $c_i\in\Nuo(c)\cup \Onu(c)$, then we set $\langle c'|l'\rangle=\langle c|l\rangle$.
\end{enumerate}
\end{definition}

\begin{example}
If $\langle c|l\rangle =\begin{pmatrix}
4 & 3 & 2 & 1 & 1\\
1 & 2 & 3 & 4 & 5
\end{pmatrix}$ with $c\in \DComp_{5,3}$, then $\psi_4(\langle c|l\rangle)=\langle c'|l'\rangle=
\begin{pmatrix}
4 & 2 & 2 & 2 & 1\\
1 & 4 & 2 & 3 & 5
\end{pmatrix}$ with $c'\in \DComp_{5,4}$. If $\langle c|l\rangle=
\begin{pmatrix}
5 & 3 & 1 & 1 & 1\\
1 & 2 & 3 & 4 & 5
\end{pmatrix}$ with $c\in \DComp_{5,2}$, then $\psi_1(\langle c|l\rangle)=\langle c'|l'\rangle=
\begin{pmatrix}
7 & 1 & 1 & 1 & 1\\
2 & 3 & 1 & 4 & 5
\end{pmatrix}$
with $c'\in \DComp_{5,1}$.
\end{example}

To prepare ourselves for the proof of the main result of this section, we collect below three lemmas concerning the mapping $\psi_{l_i}$.

\begin{lemma}\label{lem:psi-involution}
For each $i\in[n]$, $\psi_{l_i}$ induces a well-defined involution from $\DComp_n$ to itself, which we also denote as $\psi_{l_i}$. Moreover, suppose $c\in\DComp_{n,m}$. If $c_i\in\Dnu(c)$ then $\psi_{l_i}(c)\in\DComp_{n,m-1}$; if $c_i\in\Do(c)$ then $\psi_{l_i}(c)\in\DComp_{n,m+1}$.
\end{lemma}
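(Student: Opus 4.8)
The plan is to treat the three assertions---(a) $\psi_{l_i}$ lands in $\DComp_n$, (b) it is an involution, and (c) it shifts the number of non-unitary parts by $\mp1$---in turn, handling the two non-trivial cases $c_i\in\Dnu(c)$ and $c_i\in\Do(c)$ in parallel (the case $c_i\in\Nuo(c)\cup\Onu(c)$ is the identity and needs no argument). First I would dispatch the easy bookkeeping: a short computation shows that in both cases the block of positions being modified has its total preserved and its length unchanged, so $\langle c'|l'\rangle$ is again a composition of $2n+1$ into $n$ parts together with a permutation of $[n]$ as label sequence. I would also record at the outset the positivity facts that make the new values legitimate parts: in the $\Do$ case the maximality of the anchor index $j$ forces $f(c;j+1,i-1)\le 0$, whence $c_j-\alpha=2-f(c;j+1,i-1)\ge 2$ and $\alpha+1\ge 2$, so both new parts exceed $1$ (and in fact $c_j\ge 3$).

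For assertion (a) I would invoke Proposition~\ref{char:k-domi} and verify $f(c';1,t)>0$ for every $t$ by comparing the partial sums of $c'$ with those of $c$ across the regions before, inside, and after the modified block. Outside the block the partial sums are literally unchanged, and at the right end they reconnect to $c$ because the block total is preserved; the content lies inside. The key identities are $f(c';1,i)=f(c;1,i+1)+1$ in the $\Dnu$ case and $f(c';1,j)=f(c;1,i-1)-1$ in the $\Do$ case, combined with the observation that $c_i=1$ in the $\Do$ case forces $f(c;1,i-1)\ge 2$ by dominance. The one place where a naive estimate gives only $\ge 0$ is the shifted sub-block of the $\Do$ case, where $f(c';1,t)=f(c;1,t-1)-1$; I would close this gap using the maximality of $j$, which guarantees $f(c;1,s)\ge f(c;1,i-1)\ge 2$ for all $s\in\{j,\dots,i-1\}$, so that the shifted partial sums stay $\ge 1$.

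For assertion (b) I would show the two constructions are mutually inverse by tracking the label $l_i$. After a $\Dnu$-move the part carrying label $l_i$ sits at the old anchor position $j$ with $c'_j=c'_{j+1}=1$, hence is now of type $\Do$; I would compute that its anchor is exactly the original index $i$ with $\alpha'=c_i-1$, and then check that the $\Do$-move recovers $c''_i=c_i$, $c''_{i+1}=c_{i+1}$ and restores everything else, labels included. Identifying the new anchor is the crux: it requires $f(c';i,j-1)=c_i-1>0$ while $f(c';j'',j-1)\le 0$ for $i<j''<j$, and the latter reduces to $f(c;i+1,j'')\ge 0$, which I would derive from the \emph{minimality} of the $\Dnu$-anchor $j$ (an index in $(i,j)$ where the partial sum dipped below $f(c;1,i)$ would manufacture a smaller valid anchor, using L-smoothness from Proposition~\ref{L-smooth} and Proposition~\ref{down-by-one}). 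The reverse composition $\Do\to\Dnu\to\Do$ is symmetric, so the same computation yields the involution.

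Finally, assertion (c) is a multiset count of non-unitary parts restricted to the modified block: in the $\Dnu$ case the two non-unitary parts $c_i,c_{i+1}$ are replaced by the single non-unitary part $c_i+c_{i+1}-1$ together with a new $1$, a net change of $-1$; in the $\Do$ case the single non-unitary part $c_j$ and the unit $c_i=1$ are replaced by the two non-unitary parts $\alpha+1$ and $c_j-\alpha$, a net change of $+1$ (reusing $c_j\ge 3$ and $c_j-\alpha\ge 2$). I expect the dominance check inside the $\Do$ shifted sub-block and the identification of the reversed anchor in part (b) to be the main obstacles, since both hinge on extracting the strict inequality $f(c;1,s)\ge 2$ from the extremality baked into the anchor definition rather than from plain dominance.
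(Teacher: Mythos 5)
Your proposal is correct and follows essentially the same route as the paper's proof: the identical case split, the same partial-sum identities (inside the $\Dnu$-block $f(c';1,t)=f(c;1,t+1)+1$, inside the $\Do$-block $f(c';1,t)=f(c;1,t-1)-1$ with maximality of $j$ supplying $f(c;1,t-1)\ge 2$) to verify dominance via Proposition~\ref{char:k-domi}, the observation that the part carrying label $l_i$ flips type $\Dnu\leftrightarrow\Do$ (whence the $\pm 1$ change in $\nonu$), and the extremality of the anchor to establish mutual inversion. Your appeal to Propositions~\ref{L-smooth} and~\ref{down-by-one} to pin down the reversed anchor merely spells out a step the paper declares ``not hard to check'' and omits, so there is no substantive divergence.
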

\begin{proof}
We need to show that $\psi_{l_i}(c)\in\DComp_n$ and $\psi_{l_i}(\psi_{l_i}(c))=c$. There are three cases according to Definition~\ref{def-psi}. Case (iii) is quite clear since $\psi_{l_i}(c)=c$. 

For case (i) we have $c_i\in\Dnu(c)$ with $\ap(c_i)=c_j$. Clearly $c'=\psi_{l_i}(c)\in\Comp_{2n+1,n}$, we only need to show that $c'$ is dominating. For $1\le t< i$ or $j\le t\le n$, we have $f(c';1,t)=f(c;1,t)>0$, while for $i\le t<j$, we have $f(c';1,t)=1+f(c;1,t+1)>1$. So $c'\in\DComp_n$ by Proposition~\ref{char:k-domi}. Moreover, by our choice of the anchor point $c_j$, we must have $c_{j+1}=c'_{j+1}=1$, hence $c'_j=1\in\Do(c)$ with label $l'_j=l_i$. Consequently, when $\psi_{l_i}$ acts on $c'$, case (ii) of Definition~\ref{def-psi} applies and $\nonu(c')=\nonu(c)-1$. Using the extremity (i.e., smallest $j$ or largest $j$) in our definition of anchor point, it is not hard to check that $\ap(c'_j)=c'_i$, $f(c';i,j-1)=c_i-1+f(c;i+1,j)=c_i-1$, and $\psi_{l_i}(c')=c$ indeed.

Next for case (ii), we see $1=c_i\in\Do(c)$ with $\ap(c_i)=c_j$. We verify that $c'=\psi_{l_i}(c)$ is dominating. For $1\le t<j$ or $i\le t\le n$, we have $f(c';1,t)=f(c;1,t)>0$. Note that $\alpha=f(c;j,i-1)>0$ by our choice of the anchor point $c_j$, so 
$$f(c';1,j)=f(c;1,j-1)+(\alpha+1)-2=f(c;1,i-1)+c_i-2=f(c;1,i)>0.$$
For $j<t<i$, first note that $c_i=1$ forces $f(c;1,i-1)>1$. In addition, $f(c;t,i-1)\le 0$ due to the maximality of $j$ in our choice of the anchor point. So we see $f(c;1,t-1)=f(c;1,i-1)-f(c;t,i-1)>1$, hence 
\begin{align*}
f(c';1,t) &=f(c';1,j-1)+(c'_j-2)+(c'_{j+1}-2)+\cdots+(c'_t-2)\\
&=f(c;1,j-1)+(1-2)+(c_j-2)+\cdots+(c_{t-1}-2)\\
&=f(c;1,t-1)-1>0.
\end{align*}
This proves that $c'\in\DComp_n$. Note further that $c'_j=\alpha+1>1$ and $c'_{j+1}=c_j-\alpha=2-f(c;j+1,i-1)\ge 2$, which means that $c'_j\in\Dnu(c')$ with label $l'_j=l_i$, so that when $\psi_{l_i}$ acts on $c'$ case (i) applies and $\nonu(c')=\nonu(c)+1$. We omit the details of verifying $\ap(c'_j)=c'_i$ and $\psi_{l_i}(c')=c$ since they are similar to case (i).
\end{proof}

Next, we fix two labels $1\le a<b\le n$ and aim to show that $\psi_a$ commutes with $\psi_b$. Suppose $a$ and $b$ are the labels associated with the parts $c_i$ and $c_k$ respectively in $c\in\DComp_n$. Without loss of generality we assume that $i<k$. If $c_i$ or $c_k$ belongs to $\Nuo(c)\cup\Onu(c)$, it should be clear that $\psi_a(\psi_b(c))=\psi_b(\psi_a(c))$. For the remaining cases, we assume $\ap(c_i)=c_j$ and $\ap(c_k)=c_{\ell}$. We introduce the slightly more general notation $\langle x,y\rangle$ for the internal between $x$ and $y$ without specifying which is bigger. I.e., 
$$\langle x,y\rangle:=\set{z\in\bZ:\text{either $x\le z\le y$ or $y\le z\le x$}}.$$ 

\begin{lemma}\label{lemma-interval}
Given $c\in \DComp_n$, and $a,b,i,j,k,\ell$ as previously defined, we have either $\langle i,j\rangle \subseteq \langle k,\ell\rangle$, or $\langle i,j\rangle \supseteq \langle k,\ell\rangle$, or $\langle i,j\rangle \cap \langle k,\ell\rangle=\varnothing$.
\end{lemma}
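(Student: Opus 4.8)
The plan is to translate the entire statement into the language of the height sequence $g_t := f(c;1,t)$ for $0\le t\le n+1$, where $g_0=0$ and, since the appended part is $c_{n+1}=1$, also $g_{n+1}=0$; by Proposition~\ref{char:k-domi} we have $g_t\ge 1$ for $1\le t\le n$, and by Proposition~\ref{L-smooth} the sequence is L-smooth with $g_t-g_{t-1}=c_t-2$. The first step is to record, from Lemma~\ref{lemma-ap} and Definition~\ref{def:anchor}, clean ``height characterizations'' of the two anchor rules. For $c_i\in\Dnu(c)$ with $\ap(c_i)=c_j$, combining the minimality of $j$ with L-smoothness (through the mechanism of Proposition~\ref{down-by-one}) shows that $j+1$ is the first index after $i$ at which the path reaches level $g_i-1$; equivalently $g_j=g_i$, $g_t\ge g_i$ for all $i\le t\le j$, and $g_{j+1}=g_i-1$. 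Symmetrically, for $c_i\in\Do(c)$ with $\ap(c_i)=c_j$, maximality of $j$ shows that $j-1$ is the last index before $i-1$ lying strictly below level $g_{i-1}$; equivalently $g_{j-1}<g_{i-1}$ and $g_t\ge g_{i-1}$ for all $j\le t\le i-1$. These two facts are the only analytic input; the rest is combinatorial bookkeeping.

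With these in hand, the lemma is exactly the assertion that $\langle i,j\rangle$ and $\langle k,\ell\rangle$ never interleave, so I would argue by contradiction, splitting into four cases by whether each of $c_i,c_k$ lies in $\Dnu(c)$ or $\Do(c)$ (the possibility that one lies in $\Nuo(c)\cup\Onu(c)$ is already excluded, since such parts are fixed). The case $c_i\in\Do(c)$, $c_k\in\Dnu(c)$ is immediate: here $\langle i,j\rangle=[j,i]$ opens to the left of $i$ and $\langle k,\ell\rangle=[k,\ell]$ opens to the right of $k$, so $j<i<k<\ell$ and the intervals are disjoint. In each of the other three cases a crossing would force the boundary of one interval strictly inside the other, and the contradiction always arises from the same tension: a $\Dnu$-interval $[i,j]$ keeps the height $\ge g_i$ throughout yet dips to $g_{j+1}=g_i-1$ at its right boundary, whereas a $\Do$-interval $[j,i]$ keeps the height $\ge g_{i-1}$ throughout yet has $g_{j-1}<g_{i-1}$ at its left boundary.

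Concretely, when $c_i,c_k\in\Dnu(c)$ a crossing reads $i<k\le j<\ell$; then $j+1\in[k,\ell]$ gives $g_{j+1}\ge g_k\ge g_i$ (the last inequality because $k\in[i,j]$), contradicting $g_{j+1}=g_i-1$. When $c_i,c_k\in\Do(c)$ a crossing reads $j<\ell\le i<k$; evaluating at $i\in[\ell,k-1]$ gives $g_{i-1}=g_i+1\ge g_{k-1}+1$, while evaluating at $\ell-1\in[j,i-1]$ gives $g_{i-1}\le g_{\ell-1}<g_{k-1}$, a contradiction. The mixed case $c_i\in\Dnu(c)$, $c_k\in\Do(c)$ has two admissible crossing patterns, $i<\ell\le j<k$ and $\ell<i<k<j$, each dispatched by the same device of pitting the ``stays-above'' bound of one interval against the ``boundary-dip'' of the other (e.g.\ in the first pattern $g_i=g_j\ge g_{k-1}$ from $j\in[\ell,k-1]$ collides with $g_i\le g_{\ell-1}<g_{k-1}$ from $\ell-1\in[i,j]$). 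I expect the only real work to be this case bookkeeping: enumerating which crossing patterns are compatible with $i<k$ and, for each, pinpointing the single index at which the two height constraints collide; a bonus of the argument is that it also rules out single-point overlaps, so the intersection is genuinely $\varnothing$ in the non-nested case. The conceptual content—that anchors carve out a laminar family of intervals—is forced by the extremality built into Definition~\ref{def:anchor} together with L-smoothness, so once the two height characterizations are established the remainder is a finite verification.
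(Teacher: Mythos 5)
Your proposal is correct, but it takes a genuinely different route from the paper's. The paper proves Lemma~\ref{lemma-interval} by direct determination: it introduces $\tonu(c)$ and the minimum $\uf$ of the heights $f_t$ over the one-non-unitary parts strictly between positions $i$ and $k$, splits into six regimes according to how $\uf$ compares with $f_i$ and $f_k$ (together with the case $\tonu(c)=\varnothing$), and in each of roughly twenty subcases pins down \emph{which} of the three relations holds, recording the outcome in Table~\ref{tab:commute}. You instead distill Definition~\ref{def:anchor} into two extremal height characterizations (writing $g_t=f(c;1,t)$): for $c_i\in\Dnu(c)$ the anchor index $j$ satisfies $g_t\ge g_i$ on $[i,j]$, $g_j=g_i$ and $g_{j+1}=g_i-1$ (first return to level $g_i-1$; here Propositions~\ref{down-by-one} and~\ref{L-smooth} plus $c_{i+1}>1$ are exactly what is needed to exclude an earlier qualifying index), while for $c_i\in\Do(c)$ one has $g_t\ge g_{i-1}$ on $[j,i-1]$ and $g_{j-1}<g_{i-1}$ --- and then you rule out interleaving by contradiction, i.e., you prove laminarity directly rather than computing the relation. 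I verified your characterizations and the three crossing patterns you work out; they are sound, including the single-point overlaps. The one pattern you dismiss as ``the same device,'' namely $\ell<i<k<j$ with $c_i\in\Dnu(c)$ and $c_k\in\Do(c)$, does indeed fall to it: $k\in[i,j]$ gives $g_k\ge g_i$, while $i\in[\ell,k-1]$ gives $g_i\ge g_{k-1}=g_k+1$ since $c_k=1$, a contradiction; and your enumeration of admissible crossing patterns in the four type-combinations is complete. As for what each approach buys: yours is shorter and isolates the conceptual mechanism (anchor extremality plus L-smoothness forces a laminar family of intervals), whereas the paper's heavier analysis yields strictly more information --- it identifies which containment ($\subseteq$, $\supseteq$, or disjointness) occurs in each regime, and that finer data is precisely what the paper invokes afterwards in the proof of Lemma~\ref{lem:commute}, where commutativity is checked case-by-case against Table~\ref{tab:commute} (e.g., case 4(a)). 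So if your proof replaced the paper's, the commutativity argument would need to be reorganized, or the containment direction re-derived where nesting occurs; for the lemma as stated, however, your argument is complete and cleaner.
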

\begin{proof}
We begin with several notations. Let
$$\tonu(c):=\set{c_t\in\Onu(c):i<t<k}$$ be the set of parts between $c_i$ and $c_k$ that are one non-unitary. For $1\le t\le n$, let $f_t:=f(c;1,t)$, and $\uf:=\min\set{f_d: c_d\in \tonu(c)}$. Without loss of generality, let us assume that $d\in (i,k)$ is the smallest index such that $f_d=\uf$.

We summarize in Table~\ref{tab:commute} the relation between $\langle i,j\rangle$ and $\langle k,\ell\rangle$ in various cases. We conduct a row-by-row verification of all these cases, which is tedious but for the most part straightforward. 	

\begin{enumerate}[font=\bfseries]
    \item[case 1]$\tonu(c)= \varnothing$. We consider the following four subcases.
    \begin{enumerate}
		\item $c_i,c_k\in\Dnu(c)$. This means that all parts between $c_i$ and $c_k$ are non-unitary, so we see $j>k$ since $c_{j+1}=1$. Moreover, $f(c;i+1,j)=0$ and $f(c;i+1,k)\ge 0$ implies that $f(c;k+1,j)\le 0$, thus $c_j$ is located to the right of (or is exactly) the anchor point of $c_k$, namely $c_{\ell}$. In other words, we have shown that $\langle i,j\rangle\supseteq \langle k,\ell\rangle$.
		\item $c_i,c_k\in\Do(c)$. This means that all parts between $c_i$ and $c_k$ are ones, so in particular $\ell<i$. Now $f(c;\ell,k-1)>0$ and $f(c;i,k-1)<0$ implies that $f(c;\ell,i-1)>0$. Therefore we must have $j\ge \ell$ and $\langle i,j\rangle\subseteq \langle k,\ell\rangle$.
		\item $c_i\in\Dnu(c)$, $c_k\in\Do(c)$ and $f_i>f_k$. This implies that $f(c;i+1,k)=f_k-f_i<0$ so the anchor point of $c_i$ sits strictly to the left of $c_k$, i.e., $j<k$. And $\tonu(c)=\varnothing$ so all parts between $c_j$ and $c_k$ are ones. Noting that $f(c;i+1,j)=0$ we can deduce $f(c;t,k-1)\le 0$ for all $i+1\le t\le k-1$. Hence the anchor point of $c_k$ sits to the left of $c_{i+1}$, meaning that $\ell\le i$ and $\langle i,j\rangle\subseteq \langle k,\ell\rangle$ as claimed.
		\item $c_i\in\Dnu(c)$, $c_k\in\Do(c)$ and $f_i\le f_k$. This means that $f(c;i+1,t)\ge 0$, for every $i+1\le t\le k$. This in turn implies that $j\ge k$. On the other hand, $f(c;i,k-1)-f(c;i+1,k)=c_i-c_k\ge 1$ thus $f(c;i,k-1)>0$. Consequently the anchor point of $c_k$ sits to the right of (or is exactly) $c_i$ and we have $\langle i,j\rangle\supseteq \langle k,\ell\rangle$.
	\end{enumerate}
	\item[case 2]$\uf< f_i$ and $\uf<f_k$. We consider the following four subcases.
	\begin{enumerate}	
		\item $c_i,c_k\in \Dnu(c)$. This implies that $f(c;i+1,d)=f_d-f_i< 0$, so $c_j$, the anchor point of $c_i$ sits strictly to the left of $c_d$, i.e., $j<d$. Therefore we have $i<j<d<k<\ell$ and $\langle i,j\rangle \cap \langle k,\ell\rangle=\varnothing$.
		\item $c_i,c_k\in \Do(c)$. This implies that $f(c;k,k)=f(c;d,d)=-1$, then we have $f(c;d,k-1)=f_{k-1}-f_{d-1}=(f_k+1)-(f_d+1)
		=f_k-f_d>0$. Thus $c_{\ell}$, the anchor point of $c_k$, is located to the right of (or is exactly) $c_d$, i.e., $d\le \ell$. Therefore we have $j<i<d\le \ell<k$ and $\langle i,j\rangle \cap \langle k,\ell\rangle=\varnothing$.
		\item $c_i\in\Dnu(c)$, $c_k\in\Do(c)$. This implies that $f(c;i+1,d)=f_d-f_i< 0$, so $c_j$ sits strictly to the left of $c_d$, i.e., $j<d$. And this also implies that $f(c;k,k)=f(c;d,d)=-1$, then we have $f(c;d,k-1)=f_{k-1}-f_{d-1}=(f_k+1)-(f_d+1)=f_k-f_d>0$, which means $d\le \ell$. Therefore we have $\langle i,j\rangle \cap \langle k,\ell\rangle=\varnothing$.
		\item $c_i\in\Do(c)$, $c_k\in\Dnu(c)$. We see $j<i<k<\ell$ directly from the definitions of $\Dnu(c)$ and $\Do(c)$. Hence $\langle i,j\rangle \cap \langle k,\ell\rangle=\varnothing$ as well.
	\end{enumerate}
	\item[case 3]$f_k=\uf<f_i$. We consider the following four subcases. 
	\begin{enumerate}
		\item $c_i,c_k\in \Dnu(c)$. We have $f(c;i+1,d)=f_d-f_i<0$, thus $c_j$ sits strictly to the left of $c_d$. Therefore we have $i<j<d<k<\ell$ and $\langle i,j\rangle\cap \langle k,\ell\rangle=\varnothing$. 
		\item $c_i,c_k\in \Do(c)$. We have $f(c;k,k)=-1$ and $f(c;d+1,k)=0$, which implies that $f(c;d+1,k-1)=f(c;d+1,k)-f(c;k,k)=1>0$. Thus $c_\ell$ sits strictly to the right of $c_d$, i.e., $d< \ell <k$. Recall that $c_i\in\Do(c)$, so we have $j<i<d< \ell<k$ and $\langle i,j\rangle\cap \langle k,\ell\rangle=\varnothing$.
		\item $c_i\in \Dnu(c), c_k\in \Do(c)$. With similar arguments as in (a) and (b), we deduce that $i<j<d< \ell<k$ and $\langle i,j\rangle\cap \langle k,\ell\rangle=\varnothing$.  
		\item $c_i\in \Do(c), c_k\in \Dnu(c)$. Simply by the definitions of $\Do(c)$ and $\Dnu(c)$ we see that $j<i<d<k<\ell$ and $\langle i,j\rangle\cap \langle k,\ell\rangle=\varnothing$. 
    \end{enumerate}
    \item[case 4]$f_k<\uf<f_i$. If $c_k\in\Dnu(c)$, then for the largest $m$ such that $c_m\in\tonu(c)$, we see $c_t>1$ for all $m<t\le k$, thus $f_m\le f_k$, in turn this means $\uf\le f_k$, a contradiction. So we must have $c_k\in\Do(c)$ and there are only two subcases to consider.
    \begin{enumerate}
    	\item $c_i,c_k\in \Do(c)$. This implies that $f(c;d+1,k-1)\le 0$. Due to the minimality of $f_d$, we have $f(c;t,k-1)\le 0$ for every $i+1\le t<k$. In addition, note that
    	\begin{align*}
    	f(c;i,k-1)&=f(c;i+1,k)+f(c;i,i)-f(c;k,k)\\
    	&=f(c;i+1,k)=f_k-f_i<0,
    	\end{align*}
    	we see $\ell<i$. Moreover, note that $f(c;\ell,i-1)=f(c;\ell,k-1)-f(c,i,k-1)>0$, which implies that $j\ge\ell$, and hence $\langle i,j\rangle\subseteq \langle k,\ell\rangle$.     	   	
    	\item $c_i\in \Dnu(c), c_k\in \Do(c)$. This implies that $f(c;i+1,d)=f_d-f_i<0$, so $j<d<k$. We argue in a similar fashion as in (a) to see that $f(c;t,k-1)\le 0$ for every $i+1\le t<k$. Therefore we have $\ell\le i$, meaning that $\langle i,j\rangle\subseteq \langle k,\ell\rangle$.  
    \end{enumerate}
    \item[case 5]$f_i \le \uf< f_k$. First note that $f_i\le\uf$ excludes the cases with $c_i\in\Do(c)$, so it suffices to consider the following two subcases.
    \begin{enumerate}
    	\item $c_i,c_k\in \Dnu(c)$. A moment of reflection reveals that $f(c;i+1,t)\ge 0$ for all $t\in [i+1,k]$, hence the anchor point of $c_i$ cannot lie in the inverval $[i+1,k]$, i.e., $j>k$. Now $f(c;i+1,j)=0$ and $f(c;i+1,k)=f_k-f_i>0$ lead to $f(c;k+1,j)=f(c;i+1,j)-f(c;i+1,k)<0$, implying that $k<\ell<j$. We have $\langle i,j\rangle\supseteq \langle k,\ell\rangle$.    	   	
    	\item $c_i\in \Dnu(c), c_k\in \Do(c)$. We argue as in (a) to deduce that $j>k$. Moreover with $f(c;i+1,k-1)=f(c;i+1,k)-f(c;k,k)>0$ we get $\ell\ge i+1$, meaning that $\langle i,j\rangle\supseteq \langle k,\ell\rangle$.     	
    \end{enumerate}
	\item[case 6] $\uf\ge f_i$ and $\uf\ge f_k$. Again $f_i\le\uf$ eliminates the cases with $c_i\in\Do(c)$. We discuss the remaining three subcases.
	\begin{enumerate}	
		\item $c_i,c_k\in\Dnu(c)$. With $\uf\ge f_k$ and $c_k\in\Dnu(c)$, the only possibility is $f_i\le \uf=f_k$. Applying a similar argument as in case 5(a), we derive that $\langle i,j\rangle\supseteq \langle k,\ell\rangle$.
		\item $c_i\in \Dnu(c)$, $c_k\in \Do(c)$ and $f_i> f_k$. With $f(c;i+1,k)=f_k-f_i<0$ we see $j<k$. Furthermore, note that $\uf\ge f_i>f_k$, so applying a similar argument as in case 4(b) we deduce that $\ell\le i$. Therefore we have $\langle i,j\rangle\subseteq \langle k,\ell\rangle$ as claimed.		   
		\item $c_i\in \Dnu(c)$, $c_k\in\Do(c)$ and $f_i\le f_k$. With $f(c;i+1,k-1)=f(c;i+1,k)-f(c;k,k)>0$ we deduce $i<\ell$. Knowing that $f_i\le f_k\le\uf$, we get $f(c;i+1,t)\ge 0$ for all $i+1\le t\le k$. Thus $j\ge k$, and we have $\langle i,j\rangle\supseteq \langle k,\ell\rangle$ as desired.
	\end{enumerate}
\end{enumerate}
\end{proof}

\renewcommand{\arraystretch}{1.5}
\begin{table}
	\caption{Case-by-case breakdown of the relation between $\langle i,j\rangle$ and $\langle k,\ell\rangle$}\label{tab:commute}
	\centering 
	\resizebox{\textwidth}{!}{
\begin{tabular}{cc|c|c|cc|c}
	\hline
	\multicolumn{2}{c|}{\multirow{2}{*}{}} & \multirow{2}{*}{$c_i,c_k \in \mathsf{Dnu}(c) $} & \multirow{2}{*}{$c_i,c_k \in \mathsf{Do}(c) $} & \multicolumn{2}{c|}{$c_i\in \mathsf{Dnu}(c)$, $c_k\in \mathsf{Do}(c)$} & \multirow{2}{*}{$c_i\in \mathsf{Do}(c)$, $c_k\in \mathsf{Dnu}(c)$}  \\ \cline{5-6}
	\multicolumn{2}{c|}{}   &  &  & \multicolumn{1}{c|}{$f_i>f_k$} & $f_i\le f_k$ &    \\ \hline
	\multicolumn{2}{c|}{$\tonu(c)=\varnothing$}  & $\langle i,j\rangle\supseteq \langle k,\ell\rangle$ &$\langle i,j\rangle\subseteq \langle k,\ell\rangle$ & \multicolumn{1}{c|}{$\langle i,j\rangle\subseteq \langle k,\ell\rangle$}  & $\langle i,j\rangle\supseteq \langle k,\ell\rangle$ & inexistence \\ \hline
	\multicolumn{1}{c|}{\multirow{5}{*}{$\tonu(c)\ne \varnothing$}} & $\uf< f_i$ and $\uf<f_k$ &$\langle i,j\rangle \cap \langle k,\ell\rangle=\varnothing$& $\langle i,j\rangle \cap \langle k,\ell\rangle=\varnothing$ & \multicolumn{2}{c|}{$\langle i,j\rangle \cap \langle k,\ell\rangle=\varnothing$} & $\langle i,j\rangle \cap \langle k,\ell\rangle=\varnothing$ \\ \cline{2-7} 
	\multicolumn{1}{c|}{} & $f_k=\uf<f_i$ & $\langle i,j\rangle \cap \langle k,\ell\rangle=\varnothing$ & $\langle i,j\rangle \cap \langle k,\ell\rangle=\varnothing$ & \multicolumn{1}{c|}{$\langle i,j\rangle \cap \langle k,\ell\rangle=\varnothing$} & inexistence &$\langle i,j\rangle \cap \langle k,\ell\rangle=\varnothing$\\ \cline{2-7} 
	\multicolumn{1}{c|}{} & $f_k<\uf<f_i$ & inexistence & $\langle i,j\rangle\subseteq \langle k,\ell\rangle$ & \multicolumn{1}{c|}{$\langle i,j\rangle\subseteq \langle k,\ell\rangle$} & inexistence & \multirow{3}{*}{inexistence} \\ \cline{2-6}
	\multicolumn{1}{c|}{} & $f_i\le \uf<f_k$ & $\langle i,j\rangle\supseteq \langle k,\ell\rangle$ & inexistence & \multicolumn{1}{c|}{inexistence} & $\langle i,j\rangle\supseteq \langle k,\ell\rangle$ & \\ \cline{2-6}
	\multicolumn{1}{c|}{} & $\uf \ge f_i$ and $ \uf\ge f_k$ & $\langle i,j\rangle\supseteq \langle k,\ell\rangle$ & inexistence & \multicolumn{1}{c|}{$\langle i,j\rangle\subseteq \langle k,\ell\rangle$} & $\langle i,j\rangle\supseteq \langle k,\ell\rangle$ & \\ \hline
\end{tabular}	
}
\end{table}            

\begin{lemma}\label{lem:commute}
Given two labels $1\le a<b\le n$, the mapping $\psi_a$ commutes with $\psi_b$. I.e., we have $\psi_a(\psi_b(c))=\psi_b(\psi_a(c))$ for every $c\in\DComp_n$.
\end{lemma}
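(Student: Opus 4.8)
The plan is to combine the \emph{locality} of each involution with the trichotomy established in Lemma~\ref{lemma-interval}. Reading off Definition~\ref{def-psi}, the map $\psi_a$ attached to a part $c_i\in\Dnu(c)\cup\Do(c)$ with anchor $c_j$ modifies the pair $\langle c|l\rangle$ only at positions lying in $\langle i,j\rangle$: every part $c_t$ and every label $l_t$ with $t\notin\langle i,j\rangle$ is left untouched, and since the action conserves the total sum over $\langle i,j\rangle$, it also preserves all partial sums $f(c;1,t)$ for $t\notin\langle i,j\rangle$. Hence $\psi_a$ can affect neither the type, nor the anchor, nor the label of any distinguished part whose active interval is disjoint from $\langle i,j\rangle$. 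I would first dispose of the fixed-point case: when $c_i\in\Nuo(c)\cup\Onu(c)$ we have $\psi_a(c)=c$, and a short check at the boundary of $\langle k,\ell\rangle$ shows that the part carrying label $a$ is still of type $\Nuo/\Onu$ in $\psi_b(c)$, so $\psi_a$ fixes $\psi_b(c)$ as well and commutativity is immediate; the symmetric case where $c_k\in\Nuo(c)\cup\Onu(c)$ is identical.

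For the main case $c_i,c_k\in\Dnu(c)\cup\Do(c)$, I would invoke Lemma~\ref{lemma-interval} to split into the disjoint and the nested situations. In the \emph{disjoint} case $\langle i,j\rangle\cap\langle k,\ell\rangle=\varnothing$, each map fixes the other's active interval pointwise and, by the locality observation above, preserves the partial sums governing the other's anchor; in particular $c_k$ and $c_{k+1}$ both lie outside $\langle i,j\rangle$, so $\psi_a$ leaves the type, anchor position, and label of $c_k$ intact, and vice versa. Thus $\psi_a$ and $\psi_b$ act on genuinely separate regions and compose to the same pair in either order. Throughout, Lemma~\ref{lem:psi-involution} guarantees that every intermediate object is again a dominant composition, so the maps are well defined at each step.

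The crux is the \emph{nested} case, say $\langle i,j\rangle\subseteq\langle k,\ell\rangle$ (the reverse nesting being symmetric under swapping the roles of $a$ and $b$). Here the outer map $\psi_b$ rearranges its whole interval, carrying the inner sub-array of parts and labels on $\langle i,j\rangle$ to a translated interval while preserving its internal pattern. The work is to verify three things: that the part labeled $a$ retains its $\Dnu$/$\Do$ type under $\psi_b$; that its new anchor is exactly the correspondingly translated position; and that applying $\psi_a$ to this transported block reproduces, entry by entry, the outcome of performing $\psi_a$ first and then $\psi_b$. I expect this to be the main obstacle, precisely because the outer involution both changes part values and shifts positions inside its interval, so one must track the defining data of the inner involution through the rearrangement and then match two composite reorderings position by position. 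I would carry this out by the same $f$-value bookkeeping used in Lemma~\ref{lemma-interval}, relying on Proposition~\ref{char:k-domi} and Proposition~\ref{L-smooth}, and organized along the rows of Table~\ref{tab:commute} but now computed in both orders of composition.
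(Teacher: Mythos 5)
Your proposal is correct and takes essentially the paper's own route: locality disposes of the fixed-point and disjoint cases, and the nested case is settled by the trichotomy of Lemma~\ref{lemma-interval} together with case-by-case $f$-value bookkeeping along the rows of Table~\ref{tab:commute}, tracking the type and (translated) anchor of the inner part through the outer map. This matches the paper's argument, which likewise carries out only one representative nested case (case 4(a), $c_i,c_k\in\Do(c)$ with $f_k<\uf<f_i$, showing the anchor of $c_k$ is unchanged by $\psi_a$) and leaves the remaining verifications to the reader.
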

\begin{proof}
We assume the same notation as in Lemma~\ref{lemma-interval}. As already mentioned before Lemma~\ref{lemma-interval}, if $c_i$ or $c_k$ belongs to $\Nuo(c)\cup\Onu(c)$, then one of $\psi_a$ and $\psi_b$ becomes the identity map, so clearly $\psi_a(\psi_b(c))=\psi_b(\psi_a(c))$. We can now assume $\set{c_i,c_k}\subseteq\Dnu(c)\cup\Do(c)$. Thanks to Lemma~\ref{lemma-interval}, we know for the two intervals $\langle i,j\rangle$ and $\langle k,\ell\rangle$, either they are disjoint, or one contains the other. If they are disjoint, obviously $\psi_a(\psi_b(c))=\psi_b(\psi_a(c))$ holds. Otherwise one contains the other, we can use table~\ref{tab:commute} to verify the commutativity case-by-case. Since the arguments are mostly the same, we elaborate on one case and leave the others to the reader. 

Suppose we are in case 4(a), i.e., $c_i,c_k\in\Do(c)$ and $f_k<\uf<f_i$. Recall that we have shown $\ell\le j<i<k$ and recall Definition~\ref{def-psi}(ii). We observe that $f(c;i+1,k-1)=f(c;i+1,k)-f(c;k,k)=f_k-f_i+1\le 0$, which ensures that for $c':=\psi_a(c)$, we have respectively: 
\begin{align*}
f(c';t,k-1) &=f(c;t,k-1)\le 0,\text{ for $i+1\le t<k$ or $\ell<t\le j$,}\\
f(c';t,k-1) &=f(c';t,i)+f(c';i+1,k-1)\\
&=f(c;t-1,i-1)+f(c;i+1,k-1)\le 0, \text{ for $j+2\le t\le i$,}\\
f(c';j+1,k-1) &=f(c';j+1,i)+f(c';i+1,k-1)\\
&=f(c;j,i-1)-\alpha+f(c;i+1,k-1)\le 0, \text{ and}\\
f(c';\ell,k-1) &= f(c;\ell,k-1)>0.
\end{align*}
In other words, the anchor point for $c'_k=c_k=1$ in $c'$ is still $c'_{\ell}$ and $\psi_b(c')$ agrees with $\psi_b(c)$ outside of the interval $\langle i,j\rangle$ of indices. Consequently $\psi_a(\psi_b(c))=\psi_b(\psi_a(c))$ as claimed.
\end{proof}
                        
From lemmas~\ref{lem:psi-involution} and \ref{lem:commute}, we can conclude that all $\psi_i$'s are involutions and commute with each other. For any subset $S\subseteq [n]$ we can then define the map $\psi_S$: $\DComp_n \to \DComp_n$ by
$$ \psi_S(c):=\left(\prod_{i\in S}\psi_i\right)(c).$$
Hence the group $\bZ_2^n$ acts on $\DComp_n$ via the fuctions $\psi_S$, $S\subseteq [n]$. Here each element $g\in \bZ_2^n$ naturally corresponds to a subset $S(g)\subseteq [n]$. For instance, $(1,0,1,1,0)\in\bZ_2^5$ corresponds to the subset $\set{1,3,4}\subseteq [5]$. For any composition $c \in \DComp_n$, let $\Orb(c)=\set{\psi_{S(g)}(c): g \in \bZ_2^n} $ be the orbit of $c$ under this action; see Fig.~\ref{fig:orbit} below for a complete orbit containing the dominant composition $c=(6,1,1,1)\in\DComp_4$, corresponding to the term $t(1+t)^3$ in the $\gamma$-expansion:
$$N_4(t)=t+6t^2+6t^3+t^4=t(1+t)^3+3t^2(1+t).$$

\begin{figure}[htp]
	\begin{tikzpicture}[scale=0.8]
		\draw(-.2,4.2) node{\text{\tiny $\begin{pmatrix}
		6 & 1 & 1 & 1\\
		1 & 2 & 3 & 4
		\end{pmatrix}$}};
		\draw(3.8,4.2) node{\text{\tiny $\begin{pmatrix}
		4 & 3 & 1 & 1\\
		3 & 1 & 2 & 4
		\end{pmatrix}$}};
		\draw(8,4.2) node{\text{\tiny $\begin{pmatrix}
		3 & 2 & 3 & 1\\
		4 & 3 & 1 & 2
		\end{pmatrix}$}};
		\draw(12,4.2) node{\text{\tiny $\begin{pmatrix}
		3 & 2 & 2 & 2\\
		4 & 3 & 2 & 1
		\end{pmatrix}$}};
		\draw(3.8,8.2) node{\text{\tiny $\begin{pmatrix}
		5 & 2 & 1 & 1\\
		2 & 1 & 3 & 4
		\end{pmatrix}$}};
		\draw(8.2,8.2) node{\text{\tiny $\begin{pmatrix}
		4 & 2 & 2 & 1\\
		3 & 2 & 1 & 4
		\end{pmatrix}$}};
		\draw(3.7,.2) node{\text{\tiny $\begin{pmatrix}
		3 & 4 & 1 & 1\\
		4 & 1 & 2 & 3
		\end{pmatrix}$}};
		\draw(8,.2) node{\text{\tiny $\begin{pmatrix}
		3 & 3 & 2 & 1\\
		4 & 2 & 1 & 3
		\end{pmatrix}$}};
		
		\draw[-latex] (0,4.8)->(4,7.7);
		\draw[-latex] (0,3.7)->(4,0.8);
		\draw[-latex] (8,7.7)->(12,4.8);
		\draw[-latex] (8,0.8)->(12,3.7);
		\draw[-latex] (1,4.25)->(2.5,4.25);
		\draw[-latex] (5,4.25)->(6.7,4.25);
		\draw[-latex] (9.3,4.25)->(10.8,4.25);
		\draw[-latex] (5,8.25)->(7,8.25);
		\draw[-latex] (5,0.25)->(6.7,0.25);
		\draw[-latex] (5,4.8)->(7,7.7);
		\draw[-latex] (5,0.8)->(7,3.7);
		\draw[-latex] (5,7.7)->(7,0.8);
		
		\draw(2,6.7) node{$\psi_2$};
		\draw(2,4.55) node{$\psi_3$};
		\draw(2,1.8) node{$\psi_4$};
		\draw(6,8.5) node{$\psi_3$};
		\draw(6,-.05) node{$\psi_2$};
		\draw(6.4,4.55) node{$\psi_4$};
		\draw(10,4.55) node{$\psi_2$};
		\draw(5.3,1.8) node{$\psi_3$};
		\draw(7,1.8) node{$\psi_4$};
		\draw(10.1,6.7) node{$\psi_4$};
		\draw(10,1.8) node{$\psi_3$};
		\draw(6.5,6.4) node{$\psi_2$};
	\end{tikzpicture}
	\caption{A complete orbit under the action $\psi$}\label{fig:orbit}
\end{figure}
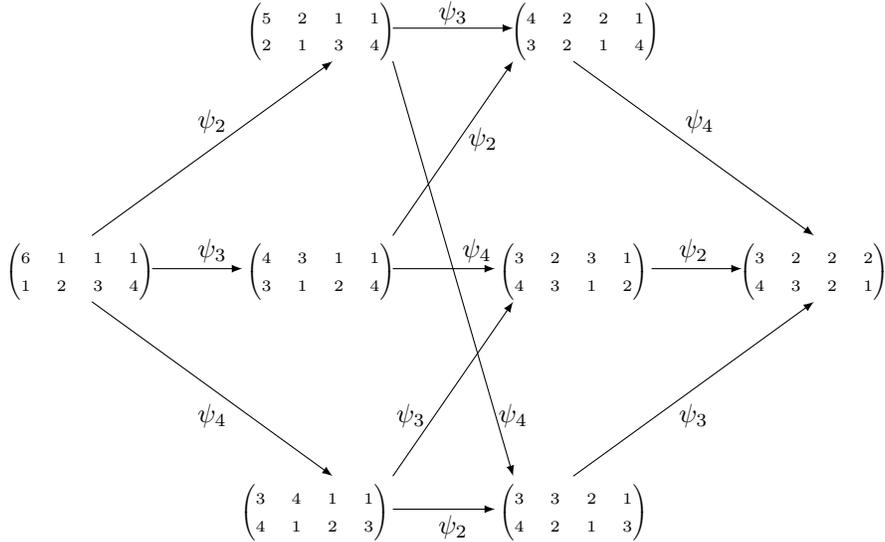

We are in a position to give a ``valley-hopping'' proof of the following result, which includes two new interpretations for the $\gamma$-coefficients of Narayana polynomials in terms of dominant compositions (or equivalently, cyclic compositions).
\begin{theorem}
The Narayana polynomial $N_k(t)$ is $\gamma$-positive for all $k \ge 1$. It has the following $\gamma$-expansion:
\begin{align}
N_k(t)=\sum_{j=1}^{\left \lfloor \frac{k+1}{2} \right \rfloor } \gamma^{N}_{k,j} t^j(1+t)^{k+1-2j} ,
\end{align}
where
\begin{align}
\label{gamma-inter1}
\gamma^{N}_{k,j}&=\abs{\set{c\in \DComp_{k,j} : \Dnu(c)=\varnothing}}\\
&=\abs{\set{c\in \DComp_{k,k-j+1} : \Do(c)=\varnothing}}.\label{gamma-inter2}
\end{align}
\end{theorem}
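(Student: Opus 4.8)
The plan is to run a valley-hopping argument on the group action, in the spirit of the classical Foata--Strehl proof of $\gamma$-positivity. First I would recall from the preliminary section that $N_k(t)=C_k(t)=\sum_{c\in\DComp_k}t^{\nonu(c)}$, so it suffices to prove that this generating polynomial over $\DComp_k$ is $\gamma$-positive with the advertised interpretation of its coefficients. Since Lemmas~\ref{lem:psi-involution} and~\ref{lem:commute} say that $\psi_1,\ldots,\psi_n$ are commuting involutions, the group $\bZ_2^n$ acts on $\DComp_k$ via the maps $\psi_S$, and its orbits partition $\DComp_k$. The whole argument then reduces to computing $\sum_{c'\in\Orb(c)}t^{\nonu(c')}$ for a single orbit.

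Then I would set up the orbit bookkeeping. Call a label $a$ \emph{active} for $c$ if $\psi_a(c)\neq c$, equivalently if the part carrying label $a$ lies in $\Dnu(c)\cup\Do(c)$. Using commutativity, the set $\mathcal{A}$ of active labels is an orbit invariant, and the orbit is exactly $\{\psi_S(c):S\subseteq\mathcal{A}\}$, of size $2^{r}$ with $r:=\abs{\mathcal{A}}$. The key local fact I would prove is that for an active label $a$, the map $\psi_a$ flips the part labelled $a$ between $\Do$ and $\Dnu$ while \emph{preserving the type} ($\Nuo$, $\Onu$, $\Dnu$, or $\Do$) of every other label. This is a direct check from Definition~\ref{def-psi}: in case (i) the parts at positions $i+1,\ldots,j$ are relabelled and shifted so that every label other than $a$ retains the unit/non-unit status of itself and of its successor, hence its type, while the parts outside $\langle i,j\rangle$ are untouched; case (ii) is the inverse. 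Granting this, $\nuo$ and $\onu$ are orbit-invariant, and starting from the unique representative $c^0$ with $\Dnu(c^0)=\varnothing$ (the element minimizing $\nonu$, in which every active label is in the $\Do$ state), an easy induction gives $\dnu(\psi_S(c^0))=\abs{S}$. Hence the orbit generating function factors as $\sum_{S\subseteq\mathcal{A}}t^{\,\nonu(c^0)+\abs{S}}=t^{\,j}(1+t)^{r}$, where $j:=\nonu(c^0)$.

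Next I would pin down the exponent $r$ in terms of $j$. For the minimal representative $c^0\in\DComp_{k,j}$ with $\Dnu(c^0)=\varnothing$, every non-unitary part is a $\Nuo$ part, so $\nuo(c^0)=j$ and $\dnu(c^0)=0$; moreover dominance forces $c^0_1\ge 3$, so the first part is non-unitary and each of the remaining $j-1$ non-unitary parts is immediately preceded by a $1$, giving $\onu(c^0)=j-1$. Since the four types partition the $k$ parts, $r=\sdo(c^0)=k-\nuo(c^0)-\onu(c^0)-\dnu(c^0)=k+1-2j$. Summing the orbit contributions, grouped by the value $j$ of their minimal representative, yields $N_k(t)=\sum_{j}\gamma^{N}_{k,j}\,t^{j}(1+t)^{k+1-2j}$, where $\gamma^{N}_{k,j}$ equals the number of orbits whose minimal representative has $\nonu=j$, which is precisely $\abs{\set{c\in\DComp_{k,j}:\Dnu(c)=\varnothing}}$, establishing \eqref{gamma-inter1}; the range $1\le j\le\lfloor(k+1)/2\rfloor$ follows from $r\ge 0$ together with $c_1\ge 3$.

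Finally, for the second interpretation \eqref{gamma-inter2}, I would apply the full flip $\psi_{\mathcal{A}}$, which carries the minimal representative $c^0$ (all active labels in $\Do$) to the maximal representative of its orbit (all active labels in $\Dnu$, so $\Do=\varnothing$), with $\nonu=j+r=k+1-j$. This is a bijection between the orbits counted by $\set{c\in\DComp_{k,j}:\Dnu(c)=\varnothing}$ and those counted by $\set{c\in\DComp_{k,k-j+1}:\Do(c)=\varnothing}$, proving that the two expressions for $\gamma^{N}_{k,j}$ agree. The main obstacle is the local type-preservation claim of the second paragraph: everything downstream (the clean $(1+t)^r$ factorization and both combinatorial interpretations) hinges on showing that a single $\psi_a$ toggles only its own label's $\Dnu/\Do$ status and leaves all other types intact, and it is exactly this independence of the active coordinates that upgrades the symmetry of $N_k(t)$ to full $\gamma$-positivity.
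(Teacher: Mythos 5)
Your proposal is correct and follows essentially the same route as the paper's proof: the $\bZ_2^k$-action via the commuting involutions $\psi_{l_i}$ partitions $\DComp_k$ into orbits, each contributing $t^j(1+t)^{k+1-2j}$ (using $\nuo(c^0)=j$, $\onu(c^0)=j-1$ at the representative with $\Dnu=\varnothing$), with the two extreme representatives of each orbit yielding the interpretations \eqref{gamma-inter1} and \eqref{gamma-inter2}. Your only departure is one of rigor, not of method: you make explicit the local type-preservation of $\psi_a$ on all other labels and the resulting orbit size $2^{k+1-2j}$, which the paper leaves implicit in its phrase ``witnessing a switch from $t$ to $1$ in the factor $(1+t)$.''
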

\begin{proof}
Suppose a composition $c\in\DComp_k$ satisfies $\nuo(c)=j$. Since $c$ is dominant, it must begin with a non-unitary part and recall our convention $c_{k+1}=1$. Each part in $\Nuo(c)$ is followed by a one, hence we have $1\le j\le \lfloor\frac{k+1}{2}\rfloor$ and $\onu(c)=j-1$. Now for the remaining $k-j-(j-1)=k-2j+1$ parts of $c$, take one of them, say $c_i$ with label $l_i$. It could be either $c_i\in\Dnu(c)$, then $\psi_{l_i}(c)$ is a composition with one more double one part and one fewer double non-unitary part than $c$, witnessing a switch from $t$ to $1$ in the factor $(1+t)$; or it could be $c_i\in\Do(c)$, then $\psi_{l_i}(c)$ is a composition with one more double non-unitary and one fewer double one than $c$, witnessing a switch from $1$ to $t$ in the factor $(1+t)$. In summary, we conclude that the $\bZ^k_2$-action divides the set $\DComp_k$ into disjoint orbits, with each orbit $\Orb(c)$ corresponding to a term $t^j(1+t)^{k+1-2j}$, where $j=\nuo(c)$. 

Now the two interpretations represent two extreme (or uniform) choices for the representative of each orbit. Namely, we can either ``hop'' every double one left to become a double non-unitary, resulting in the interpretation \eqref{gamma-inter2}; or we can ``hop'' every double non-unitary right to become a double one and gives rise to the interpretation \eqref{gamma-inter1}.

\end{proof} 
\begin{remark}
The $\gamma$-coefficient $\gamma^N_{k,j}$ is known to have an explicit expression:
\begin{align}\label{gammaN:formula}
\gamma^N_{k,j}=\frac{1}{k}\binom{k}{j}\binom{k-j}{j-1},
\end{align}
which can be proved directly via our new interpretation given in \eqref{gamma-inter1} or \eqref{gamma-inter2}. We sketch a proof in terms of \eqref{gamma-inter1} here. Let $\beta_{k,j}$ be the number of compositions $c\in\DComp_{k,j}$ such that $\Dnu(c)=\varnothing$ and $c_k=1$. We claim that
\begin{enumerate}
	\item $\gamma^N_{k,j}=\beta_{k,j}+\beta_{k-1,j-1}$, and
	\item $\beta_{k,j}=\frac{1}{k}\binom{k}{j-1}\left(\binom{k-j+1}{j}-\binom{k-j-1}{j-2}\right)$.
\end{enumerate}
Then a straightforward computation with binomial coefficients immediately yields \eqref{gammaN:formula}. Now (i) follows from observing that either $c_k=1$ or $c_k=2$ for any dominant composition $c\in\DComp_k$. We remove $c_k$ in the latter case to arrive at a composition counted by $\beta_{k-1,j-1}$. While for (ii), note that $\beta_{k,j}$ also counts the number of cyclic compositions $[c]\in\CComp_{2k+1,k,j}$ such that for any cyclic shift $c'$ of $c$, we have $\Dnu(c')=\varnothing$. Such cyclic compositions can be enumerated via a standard combinatorial argument, like associating them with restricted solutions to
$$x_1+x_2+\cdots+x_k=2k+1$$
in $\bZ_{>0}$, analogous to the proof of Proposition~\ref{prop:comp}. The details are omitted here.
\end{remark}



Since the factor $\binom{2k+1}{k-1}$ (or $\binom{2k-1}{k-1}$) in \eqref{id:w-Narayana} (resp.~\eqref{id:w-Narayana2}) is independent of the parameter $m$, our group action $\psi$ can be easily lifted to the set of plane trees, and thus leads to the following interpretation. 

\begin{corollary}
For all $k\ge 1$, we have the expansion
$$W_{2k+1,k}(t)=\sum_{j=1}^{\lfloor\frac{k+1}{2}\rfloor}\gamma^{W_1}_{k,j}t^j(1+t)^{k+1-j},$$ 
where
\begin{align*}
\gamma^{W_1}_{k,j}&=\binom{2k+1}{k-1}\gamma^N_{k,j}\\
&=\abs{\left \{ (A,[c]): A \in\binom{[2k+1]}{k-1},c\in \DComp_{k,j}, \dnu(c)=0\right\}}\\
&=\abs{\left \{ (A,[c]): A \in\binom{[2k+1]}{k-1},c\in \DComp_{k,k-j+1}, \sdo(c)=0\right\}},
\end{align*}
and for all $k\ge 2$,
$$W_{2k-1,k}(t)=\sum_{j=1}^{\lfloor\frac{k+1}{2}\rfloor}\gamma^{W_2}_{k,j}t^j(1+t)^{k+1-j},$$ 
where
\begin{align*}
\gamma^{W_2}_{k,j}&=\binom{2k-1}{k-1}\gamma^N_{k-1,j}\\
&=\abs{\left \{ (A,[c]): A \in\binom{[2k-1]}{k-1},c\in \DComp_{k-1,j}, \dnu(c)=0\right\}}\\
&=\abs{\left \{ (A,[c]): A \in\binom{[2k-1]}{k-1},c\in \DComp_{k-1,k-j}, \sdo(c)=0\right\}}.
\end{align*}
\end{corollary}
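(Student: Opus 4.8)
The plan is to reduce everything to the $\gamma$-expansion of the Narayana polynomial established in the previous theorem, exploiting the fact that the binomial prefactors in \eqref{id:w-Narayana} and \eqref{id:w-Narayana2} do not depend on $m$. First I would record that, by \eqref{id:w-Narayana},
\begin{align*}
W_{2k+1,k}(t)=\sum_{m=1}^{k}w_{2k+1,k,m}\,t^m=\binom{2k+1}{k-1}\sum_{m=1}^{k}N_{k,m}\,t^m=\binom{2k+1}{k-1}N_k(t),
\end{align*}
and likewise $W_{2k-1,k}(t)=\binom{2k-1}{k-1}N_{k-1}(t)$ via \eqref{id:w-Narayana2}. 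Since the prefactor pulls out of the entire polynomial, substituting the expansion from the previous theorem immediately produces the asserted $\gamma$-expansions with $\gamma^{W_1}_{k,j}=\binom{2k+1}{k-1}\gamma^N_{k,j}$ and $\gamma^{W_2}_{k,j}=\binom{2k-1}{k-1}\gamma^N_{k-1,j}$; in particular the $\gamma$-positivity is inherited for free. Here I would take care to carry along the exponent of $(1+t)$ coming from each Narayana polynomial, namely $k+1-2j$ for $N_k(t)$ and $k-2j$ for $N_{k-1}(t)$, together with the corresponding ranges of $j$.

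Next I would establish the two combinatorial interpretations of each coefficient by a plain product-rule argument combined with the interpretations \eqref{gamma-inter1} and \eqref{gamma-inter2}. The point is that $\binom{2k+1}{k-1}$ is exactly the number of index sets $A\in\binom{[2k+1]}{k-1}$, while $\gamma^N_{k,j}=\abs{\set{c\in\DComp_{k,j}:\Dnu(c)=\varnothing}}$ counts the dominant compositions with $\dnu(c)=0$. Because each cyclic composition in $\CComp_{2k+1,k,j}$ has a unique dominant representative, ranging over such $c$ is the same as ranging over the cyclic classes $[c]$; multiplying the two independent counts gives precisely the stated number of pairs $(A,[c])$. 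The second interpretation follows identically from \eqref{gamma-inter2}, replacing $\Dnu(c)=\varnothing$ by $\Do(c)=\varnothing$ and $\DComp_{k,j}$ by $\DComp_{k,k-j+1}$, and the whole $W_{2k-1,k}$ case is the verbatim analogue with $2k+1$ replaced by $2k-1$ and $k$ by $k-1$.

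A more conceptual phrasing, matching the remark just before the statement, is to lift the $\bZ_2^k$-action to pairs directly. Recalling that the $k$-to-one map $\phi$ of Theorem~\ref{thm:bij-tree-pair} identifies each plane tree counted by $w_{2k+1,k,m}$ with a pair $(A,[c])$, one acts on the cyclic-composition coordinate $[c]$ through $\psi$ (via its dominant representative) while holding $A$ fixed; this is well defined because $\psi$ preserves the number of parts. The orbits then split exactly as in the theorem---each contributing a term $t^j(1+t)^{k+1-2j}$---and selecting the uniform orbit representative with $\Dnu=\varnothing$ (respectively $\Do=\varnothing$) reproduces \eqref{gamma-inter1} (respectively \eqref{gamma-inter2}), now combined with the free choice of $A$.

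I do not expect a genuine obstacle: the corollary is a formal consequence of the theorem once the $m$-independence of the prefactor is observed, so the $\gamma$-positivity and the coefficient formulas are automatic. The only delicate points are bookkeeping ones---pinning down the exponents of $(1+t)$ and the summation ranges inherited from $N_k(t)$ and $N_{k-1}(t)$, and consistently identifying each $[c]$ with its dominant representative so that the product-rule count of pairs $(A,[c])$ is unambiguous.
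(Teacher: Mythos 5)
Your proposal is correct and takes essentially the same route as the paper: the paper's proof likewise exploits the $m$-independence of the binomial prefactor by passing through the $k$-to-$1$ map $\phi$ of Theorem~\ref{thm:bij-tree-pair} (using $\gcd(k,2k+1)=1$ to collapse the $k$ pairs $(A,c')$, $c'\in[c]$, into a single pair $(A,[c])$ with dominant representative) and then inherits the orbit decomposition and the two representative choices directly from \eqref{gamma-inter1} and \eqref{gamma-inter2}, which is exactly your ``conceptual phrasing.'' One point in your favor: your bookkeeping of the exponents, $k+1-2j$ for $N_k(t)$ and $k-2j$ for $N_{k-1}(t)$ with range $j\le\lfloor k/2\rfloor$ in the latter case, is the correct one, whereas the displayed $(1+t)^{k+1-j}$ in the corollary's statement appears to be a typographical slip.
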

\begin{proof}
Recall the $k$-to-$1$ mapping $\phi$ in Theorem~\ref{thm:bij-tree-pair} and notice that for the current case, $n=2k+1$ is coprime to $k$, so the factor $1/k$ can be applied directly to the composition $c$, turning $k$ pairs $(A,c')$, $c'\in[c]$ into a single pair $(A,[c])$, which then corresponds to a unique plane tree\footnote{One way to make the correspondence between $(A,[c])$ and a plane tree unique is as follows. We agree to always find the unique cyclic shift, say $\hat{c}$ of $c$ such that $\hat{c}$ is dominant, then label the children of $\hat{c}_1$-claw as $1,2,\ldots,\hat{c}_1$, the children of $\hat{c}_2$-claw as $\hat{c}_1+1,\ldots,\hat{c}_1+\hat{c}_2$, etc.}. Moreover, the restrictions we place on the representative, i.e., the unique dominant composition $c\in[c]$ are inherited directly from \eqref{gamma-inter1} and \eqref{gamma-inter2}. This establishes the expansion for $W_{2k+1,k}(t)$ and the two interpretations of $\gamma^{W_1}_{k,j}$. The results for $W_{2k-1,k}(t)$ and $\gamma^{W_2}_{k,j}$ follow analogously.
\end{proof}

\subsection*{Acknowledgements}

This work was supported by the National Natural Science Foundation of China and the Natural Science Foundation Project of Chongqing.
\bibliographystyle{amsplain}

\end{document}